\theoremstyle{plain}
\newtheorem{thm}{\protect\theoremname}
  \theoremstyle{definition}
  \newtheorem{defn}[thm]{\protect\definitionname}
  \theoremstyle{remark}
  \newtheorem{rem}[thm]{\protect\remarkname}
  \theoremstyle{plain}
  \newtheorem{prop}[thm]{\protect\propositionname}
     \theoremstyle{plain}
  \theoremstyle{definition}
     \newtheorem{notation}[thm]{Notation}
  \newtheorem{example}[thm]{\protect\examplename}
     \theoremstyle{plain}
  \newtheorem{corollary}[thm]{\protect\corollaryname}
     \theoremstyle{plain}
  \theoremstyle{plain}
  \newtheorem{conjecture}[thm]{Conjecture}
  \newtheorem{alg}[thm]{Algorithm}
\DeclareMathOperator{\codim}{codim}
\DeclareMathOperator{\LL}{LL}
\DeclareMathOperator{\mldegree}{MLdegree}
\DeclareMathOperator{\mltable}{MLtable}
\DeclareMathOperator{\Gr}{Gr}
\newcommand{\cH}{\mathcal{H}}
\newcommand{\cL}{\mathcal{L}}
\newcommand{\cM}{\mathcal{M}}
\newcommand{\C}{\mathbb{C}}
\newcommand{\R}{\mathbb{R}}
\newcommand{\PP}{\mathbb{P}}
\newcommand{\shortM}{M}
  \providecommand{\definitionname}{Definition}
  \providecommand{\propositionname}{Proposition}
  \providecommand{\remarkname}{Remark}
\providecommand{\theoremname}{Theorem}
\providecommand{\examplename}{Example}
\providecommand{\corollaryname}{Corollary}
\begin{document}

\newcommand{\elizabeth}[1]{\textcolor{forestgreen}{\textbf{E: #1}}}


\title{Maximum likelihood geometry \\in the presence of data zeros}
\author{Elizabeth Gross\thanks{Department of Mathematics, North Carolina State University, Raleigh, NC; {\tt eagross@ncsu.edu}. This work was partially supported by NSF award DMS--1304167.} \ and Jose Israel Rodriguez\thanks{
Department of Mathematics,
University of California at Berkeley,
Berkeley, CA 94720; 
  {\tt jo.ro@berkeley.edu.}
 The second author is supported  by the US National Science Foundation DMS-0943745.}}

\date{30 April 2014}


\maketitle
\begin{abstract}

Given a statistical model, the maximum likelihood degree is the number of complex solutions to the likelihood equations for generic data.  We consider discrete algebraic statistical models and study the solutions to the likelihood equations when the data contain zeros and are no longer generic.  Focusing on sampling and model zeros, we show that, in these cases, the solutions to the likelihood equations are contained in a previously studied variety, the likelihood correspondence. 
The number of these solutions give a lower bound on the ML degree, and the problem of finding critical points to the likelihood function can be partitioned into smaller and computationally easier problems involving sampling and model zeros.  We use this technique to compute a lower bound on the ML degree for $2 \times 2 \times 2 \times 2$ tensors of border rank $\leq 2$ and $3 \times n$ tables of rank $\leq 2$ for $n=11, 12, 13, 14$, the first four values of $n$ for which the ML degree was previously unknown.

\end{abstract}

\section{Introduction}
The method of maximum likelihood estimation for a statistical model $\cM$ and an observed data vector $u \in \R^{n+1}$ involves maximizing the likelihood function $l_u$ over all distributions in $\cM$. 
This involves understanding the zero-set of a system of equations, and, thus, when the models of interest are algebraic, the process lends itself  to investigation using algebraic geometry. 
In fact, likelihood geometry has been studied in a series of papers in the field of algebraic statistics beginning with \cite{HKS05,CHKS06}. Subsequent papers include \cite{BHR07,HS10,GDP12,HRS12,Uhl12}  covering both discrete and continuous models. In addition, the complexity of finding critical points using ideal theoretic methods has been explored in \cite{FSS12}.  In this paper, we look at discrete models and the case where the observed data vector contains zero entries.

 In \cite{HKS05}, Ho{\c{s}}ten, Khetan, and Sturmfels introduce the likelihood locus and its associated incidence variety for discrete statistical models. 
 In \cite{HS13}, Huh and Sturmfels study this incidence variety further under the name of the \emph{likelihood correspondence}.  
 Given a discrete algebraic statistical model with sample space of size $n+1$ and Zariski closure $X$, the likelihood correspondence $\cL_X$ is a closed algebraic subset of $\PP^{n} \times \PP^{n}$.
 We follow \cite{HS13} and write  $\PP^n \times \PP^n$ as $\PP_p^n \times \PP_u^n$ to emphasize that the first factor is the probability space, with homogeneous coordinates $p_0, p_1, \ldots, p_n$, and the second factor is the data space, with homogeneous coordinates $u_0, u_1, \ldots, u_n$.  
 In this paper, we are concerned with special fibers of the projections  $pr_1: \cL_X \to \PP^{n}_p$ and $pr_2: \cL_X \to \PP^{n}_u$.  Specifically, we set out to understand $pr_2^{-1}(u)$ when $u$ contains zero entries and show how our understanding of $pr_2^{-1}(u)$ yields information about generic fibers of $pr_2$.  The degree of a generic fiber of $pr_2$ is known as the \emph{ML degree} (maximum likelihood degree) of $X$.

A statistical model $\mathcal{M}$ is a subset of the probability simplex
 $\Delta_n = \{ (p_0, p_1, \ldots, p_n) \in \R_{\geq 0}^{n+1} \ | \sum_{i=0}^n p_i = 1\}.$ 

Given positive integer data $u\in\mathbb{Z}_{\geq 0}^{n+1}$,
the \textit{maximum likelihood estimation} problem is to determine
 $\hat{p}\in {\cal M}$ that maximizes
the likelihood function 
\[
l_{u}=p_{0}^{u_{0}}p_{1}^{u_{1}}\cdots p_{n}^{u_{n}}
\]
restricted to ${\cal M}$. The point $\hat p\in{\cal M}$ is called the 
\textit{maximum likelihood estimate}, or MLE.
The family of models we are interested in are \emph{algebraic statistical} models, which are
defined by the vanishing of polynomial equations restricted to the
probability simplex.  

To use algebraic methods, we consider
points of ${\cal M}\subset\mathbb{R}^{n+1}$ as representatives
of points in $\mathbb{P}^{n}$ and study the Zariski closure $\overline{\cM}=X\subset\mathbb{P}^{n}$.
This makes the problem easier by relaxing the nonnegative and real constraints, which allows us to obtain an understanding about the number of possible modes of the likelihood surface.
 There are subtleties when performing this relaxation as mentioned for example in \cite{KRS} related to the the boundary of the model. 
 
 Let $p_{+}:=p_{0}+p_{1}+\cdots+p_{n}$ and $\cH_n$ be the set of points where $p_+ p_0 p_1 \cdots p_n$ equals zero.
With algebraic methods, our goal is to determine all complex critical
points of $L_{u}:=l_{u}/p_{+}^{u_{+}}$ when restricted to $X_{reg}\backslash\cH_n\subset\mathbb{P}^{n}$,
where  $X_{reg}$ is the set of regular points of $X$.  We work with $L_u$ since it is a 
function on $\PP^n$ (see Section 2.2 in \cite{DSS09}).

A point $p\in X_{reg}$
is said to be a \emph{critical point} if the gradient of $L_{u}\left(p\right)$ is
orthogonal to the tangent space of $X$ at $p$, that
is 
$\nabla L_{u}\left(p\right)\perp T_{p}X.$

If the maximum likelihood estimate  $\hat{p}$ for the data vector $u$ is in the interior of ${\cal M}$,
then $\hat{p}$ will be a critical point of $L_{u}$ over $X$. By determining the critical points of $L_u$ on $X$, we  find all local maxima of $l_u$  on $\cal M$.

When the data vector $u$ contains zero entries, each zero entry is called  either a sampling zero or a structural zero in the statistics literature. Considering $u$ as a flattened contingency table, a sampling zero at $u_i$ occurs when no observations fall into cell $i$ even though $p_i$ is nonzero. 
A structural zero occurs at $u_i$ when the probability of an observation falling into cell $i$ is zero.  
Structural and sampling zeros occur commonly in practice, for example, in large sparse data sets (for more on sampling and structural zeros see \cite{BFH75}[\S 5.1.1]). 

The terms ``sampling zero'' and ``structural zero'' are denotationally about contingency tables, but they also carry implications about $X$ as well. For example, the  term ``structural zero'' connotes that maximum likelihood estimation should proceed over a projection of $X$ (see \cite{Rap06}). Due to this secondary definition imparted to the term ``structural zero," and in view of the fact that this study is concerned with the intersection of $X$ with the hyperplane $p_i=0$ as opposed to the projection of $X$, we introduce the definition of a model zero.



\begin{defn}\textbf{[Model zeros]}  Given a model $\cM$ with $\overline{\cM}=X \subset \PP^n$ and data vector $u$ with $u_i=0$, a \emph{model zero} at cell $i$ is a zero such that the maximum likelihood estimate $\hat p$ for $u$ is a critical point of $L_u$ over $X \cap \{p_i = 0\}$.
\end{defn}

\begin{rem} For the remainder of the paper, we will use ``structural zero" to mean a zero at cell $i$ such that maximum likelihood estimation proceeds over the projection of $X$ onto all coordinates except the $i$th coordinate and  $p_i=0$.
\end{rem}

In this paper we explore the algebraic considerations of maximum likelihood estimation when the data contains sampling and model zeros.  In  Theorem \ref{thm:subproblems} of Section 3,  we show how solutions to the maximum likelihood estimation problems for data with zeros on $X$ are contained in the likelihood correspondence of $X$.  
This result gives statistical meaning to the likelihood correspondence when $u_i$ is equal to zero and we can use Theorem \ref{thm:subproblems} to compute a lower bound on the ML degree of a variety $X$.

 
This paper is organized as follows. In Section 2, we give preliminary definitions and introduce a square parameterized system called the Lagrange likelihood equations.  Proposition \ref{prop:properties} describes the properties of the Lagrange likelihood equations that will be referenced in later sections.  

In Section 3, we discuss how sampling and model zeros change the maximum likelihood problem. 
Theorem \ref{thm:subproblems} describes the special fiber $pr_2^{-1}(u)$ when $u$ contains zero entries.  
We use this theorem to give a lower bound on the ML degree of $X$.  The section continues with exploring how solutions to the Lagrange likelihood equations partition into solutions for different maximum likelihood estimation problems for sampling and model zeros; these partitions are captured in the ML tables introduced in this section.  We end this section by fully characterizing the ML degree for different sampling and model zero configurations of a generic hypersurface of degree $d$ in $\PP^n$.

We conclude with Section 4, which includes examples, timings, and applications.  In Section 4.1, we illustrate the techniques from the previous sections and report on computational timings to show the advantages of working with data zeros.  In this section, we give a lower bound on the ML degree for $3 \times n$ tables of rank $\leq 2$ for $n=11, 12, 13, 14$, these bounds give further evidence for Conjecture 4.1 in \cite{HRS12}. 
In Section 4.2, Procedure \ref{alg:1} gives a method to find critical points of $L_u$ over $X$ by computing the critical points of $L_u$ when $u$ contains model zeros; in most cases, such solutions should be easier to compute since there are less variables to consider. In Section 4.3, we extend maximum likelihood duality to $u$ with zero entries.  While in Sections 4.4. and 4.5, we look at tensor and Grassmannian examples respectively.

\section{Equations and ML degree}
The \emph{maximum likelihood degree} (ML degree) of a variety $X \subset \PP^n$ is defined as the number of critical points of the likelihood function $L_u$ on $X_{reg} \setminus \cH_n $ for generic data $u$ \cite{CHKS06}.  The ML degree of $X$ quantifies the algebraic complexity of the maximum likelihood estimation problem over the model $\cM$, indicating how feasible symbolic algebraic methods are for finding the MLE.  The ML degree has an explicit interpretation in numerical algebraic geometry as well.   Assuming that the \emph{ab initio} stage of a coefficient-parameter homotopy has been run \cite{SW05}[\S 7] the ML degree is the number of paths that need to be followed for every subsequent run.

For each $u$, all critical points of $L_u$ over $X$ form a variety.  Thus, by varying $u$ over $\PP_u^n$ we obtain a family of projective varieties with base $\PP_u^n$.  In algebraic geometry, the natural way to view this family of parameterized varieties is as a subvariety $\cL_X$ of the product variety $\PP_p^n \times \PP_u ^n$ where the elements of the family are the fibers of the canonical projection $pr_2: \PP_p^n \times \PP_u ^n \to \PP_u^n$ over the points $u$ in $\PP^n_u$.  The subvariety $\cL_X$ is called the likelihood correspondence \cite{HS13}, which is the closure in $\PP_p^n \times \PP_u^n$ of 
$$\{ (p,u) : p \in X_{reg} \setminus \cH_n \text{ and dlog}(L_u) \text{ vanishes at } p\}.$$
When $X$ is irreducible, the likelihood correspondence is an irreducible variety of dimension $n$.

Just as we can talk about a parameterized family of varieties, we can also talk about a parameterized system of polynomial equations. For us, a \textit{parameterized polynomial system} is a family $\mathcal{F}$ of polynomial equations in the variables $p_0, \ldots, p_n$ and the parameters $u_0, \ldots, u_n$. 
A member of the family is chosen by assigning a complex number to each parameter $u_i$. If $u$ is a generic vector in $\PP^{n}$,  we  call the resulting system \emph{generic}.  A system of equations is said to be $\emph square$ if the number of unknowns (variables) equals the number of equations of the system. 
Algebraic homotopies are an effective 
way to solve many members of a family $\mathcal{F}$.  
By solving a generic member of the family, we  determine the solutions to another  system of the family using a $\emph {coefficient-parameter homotopy}$ (see \cite{MS89}), thus, this viewpoint can be computationally advantageous in applications where one has to solve the same system for many different parameter values.

In this section, we define a parameterized square system of polynomial equations called the Lagrange likelihood equations. The Lagrange likelihood equations for a variety $X\subset\mathbb{P}^n $ of codimension $c$  consists of $n+1+c$ equations. 
The  $n+1+c$ unknowns are $p_0,p_1,\dots p_n,\lambda_1,\dots,\lambda_c$ and the parameters are $u_0,\dots,u_n$.  The advantage of the Lagrange likelihood equations, in addition to being a parameterized square system, is that properties of a point $(p, u)$ in the likelihood correspondence become apparent.  These properties are summarized in Proposition \ref{prop:properties}.

\begin{defn}\textbf{[Lagrange likelihood equations]}
Suppose $h_1,\dots,h_c$ is a reduced regular sequence of homogeneous polynomials, and $X$ is an irreducible component of the projective variety defined by $h_1,\dots,h_c$ with codimension $c$.
The Lagrange likelihood equations of $X$  denoted by $\LL(X,u)$
are
\begin{equation}\label{eq:LLE1}
\begin{array}{c}
h_{1}=h_2=\cdots=h_{c}=0
\end{array}
\end{equation}
\begin{equation}\label{eq:LLE2}
\begin{array}{c}
\left(u_{+}p_{i}-u_{i}\right)=p_i \left( \lambda_{1}\partial_{i}h_{1}+\lambda_{2}\partial_{i}h_{2}+\cdots +\lambda_c\partial_{i}h_{c}\right)
\end{array}\text{for } i=0, \ldots, n
\end{equation}
\end{defn}

If $X$ is a complete intersection, then $h_1, \ldots, h_c$ are  minimal generators of $I(X)$.  Otherwise, in order to satisfy the conditions imposed on $X$, one can choose $h_1, \ldots, h_c$ to be 
$c$ random linear combinations of  minimal generators of $I(X)$.

\begin{prop} \label{prop:properties}
The Lagrange likelihood equations have the following properties. 
\begin{enumerate}
\item If $(p,\lambda)$ is a solution of $\LL\left(X,u\right)$ and $u_+\neq 0$, then $\sum p_{i}=1$. 
\item If $p_{i}=0$, then $u_{i}=0$.
\item If the point $p$ is a critical point of $L_{u}$ restricted to $X_{reg}\backslash \cH_n$, then there
exists an unique $\lambda$ such that $\left(p,\lambda\right)$ is a
solution to $\LL\left(X,u\right)$. 
\item If $p\in X_{reg}\backslash \cH_n$ and $\left(p,\lambda\right)$ is a regular isolated solution to $\LL\left(X,u\right)$, then $p$ is a critical point of
$L_{u}$ on $X_{reg}\backslash \cH_n$. 
\item For generic choices of $u$, the number of solutions of $\LL\left(X,u\right)$
with $p\in X_{reg}\backslash \cH_n$ equals the ML degree of $X$. 
\end{enumerate}

\end{prop}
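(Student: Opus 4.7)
The plan is to prove each of the five parts in turn, with the main work lying in the translation between the critical-point condition $\nabla L_u(p)\perp T_pX$ and the $p_i$-scaled form of equation~(\ref{eq:LLE2}). Euler's identity for homogeneous polynomials handles part (1), and parts (3) and (4) are dual halves of the same Lagrange multiplier calculation.

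For part (1), I would sum the $n+1$ equations in (\ref{eq:LLE2}) over $i$. The left-hand side collapses to $u_+p_+-u_+$, and on the right each $\sum_{i} p_i\,\partial_i h_j$ equals $d_j\,h_j(p)$ by Euler's formula, where $d_j=\deg h_j$; this vanishes by (\ref{eq:LLE1}). Assuming $u_+\neq 0$ then forces $p_+=1$. Part (2) is an immediate consequence of setting $p_i=0$ in the $i$-th equation of (\ref{eq:LLE2}), which reduces to $-u_i=0$.

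For part (3), I would start from the critical-point condition $\nabla L_u(p)\perp T_pX$, equivalently, $\nabla\log L_u(p)$ lies in the conormal space of $X$ at $p$. Working in the affine chart $p_+=1$ (which is available by part (1)), we have $\partial_i\log L_u = u_i/p_i - u_+$. At a regular point, the conormal space is spanned by $\nabla h_1(p),\dots,\nabla h_c(p)$, and by the codimension-$c$ hypothesis combined with the reduced regular sequence assumption these gradients are linearly independent, so there exist unique scalars $\mu_j$ with $u_i/p_i - u_+ = \sum_j \mu_j\, \partial_i h_j(p)$ for all $i$. Multiplying through by $-p_i$ (nonzero since $p\notin\cH_n$) and setting $\lambda_j=-\mu_j$ recovers equation (\ref{eq:LLE2}); uniqueness of $\lambda$ is inherited from that of $\mu$. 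Part (4) is the reverse computation: given $(p,\lambda)$ satisfying (\ref{eq:LLE2}) with each $p_i\neq 0$, dividing by $p_i$ returns the gradient condition, so $p$ is a critical point of $L_u$ on $X_{reg}\setminus\cH_n$.

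For part (5), parts (3) and (4) together produce a bijection between critical points of $L_u$ on $X_{reg}\setminus\cH_n$ and solutions $(p,\lambda)$ of $\LL(X,u)$ with $p\in X_{reg}\setminus\cH_n$, once one checks that for generic $u$ every such solution is automatically regular and isolated. This holds because $\cL_X$ is $n$-dimensional and dominates $\PP_u^n$, so a generic fiber of $pr_2$ is a finite reduced scheme and the Jacobian of $\LL(X,u)$ has full rank at each such point. The step I expect to be most delicate is uniqueness of $\lambda$ in part (3): if $X$ happens to meet another component of $V(h_1,\dots,h_c)$ at $p$, then $\nabla h_1(p),\dots,\nabla h_c(p)$ may fail to span a rank-$c$ space even at a smooth point of $X$. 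The remark after the definition handles this by taking $h_1,\dots,h_c$ to be generic linear combinations of minimal generators of $I(X)$ when $X$ is not a complete intersection, which by a standard general-position argument restores the required linear independence on $X_{reg}$.
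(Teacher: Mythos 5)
Your proof is correct and follows essentially the same route as the paper: summation plus Euler's relation for (1), direct substitution for (2), and the Lagrange-multiplier reformulation of $\nabla L_u(p)\perp T_pX$ (with denominators cleared) for (3) and (4), including a welcome explicit treatment of the uniqueness of $\lambda$ that the paper leaves implicit. The only cosmetic difference is in (5), where the paper homogenizes $\LL(X,u)$ to build a variety $\hat\cL_X$ birational to the likelihood correspondence and invokes Theorem 1.6 of Huh--Sturmfels, whereas you appeal directly to the bijection from (3)--(4) together with the definition of the ML degree and generic finiteness of $pr_2$; both arguments rest on the same underlying correspondence.
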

 
\begin{proof} 
To arrive at property (1), we sum the equations of \eqref{eq:LLE2} to get
\begin{multline}
\sum_{i=0}^n( u_+ p_i-u_i- p_i(\lambda_1\partial_{i}h_{1}+\cdots+\lambda_n \partial_{i}h_{c}))
= \sum_{i=0}^n p_i u_+-u_+=u_+(\sum_{i=0}^n p_i-1).
\end{multline}
The first equality above follows by Euler's relation of homogeneous polynomials. 

The implication stated in property (2) is clearly seen by setting $p_i$ equal to zero in the $i$th equation of Equations \eqref{eq:LLE2}.
 
For properties (3) and (4), we note that, as discussed in \cite{DR12}, $p \in X$ is a critical point of $L_u$ on $X$ if and only if the linear subspace $T_p^{\perp}$ contains the point
\[ \left( \frac{u_0}{p_0}- \frac{u_{+}}{p_{+}}: \ldots : \frac{u_n}{p_n} - \frac{u_{+}}{p_{+}} \right).\]  
When $X$ is of codimension $c$, this implies that $p \in X_{reg} \setminus \cH_n$ is a critical point for $L_u$ on $X$  if and only if there exist unique $\lambda_1, \ldots ,\lambda_c \in \C$ such that for all $0 \leq i \leq n$,
\[\frac{u_i}{p_i}- \frac{u_{+}}{p_{+}}= \lambda_1 \cdot \partial_i h_1 + \ldots + \lambda_c \cdot \partial_i h_c. \]
The Langrange likelihood equations are a restatement of this condition with the  denominators cleared.

For Property (5), we homogenize the Lagrange likelihood equations using $p_+$ and $u_+$ so that each equation is homogeneous in both the coordinates $p_0, \ldots, p_n$ and  the coordinates $u_0, \ldots, u_n$,  $\lambda_1, \ldots, \lambda_c$, then the Lagrange likelihood equations define a variety $Y$ in the product space $\PP_p^n \times \PP_{u,\lambda}^{n+c}$. Intersecting $Y$ with $X\times\PP^{n+c}$ gives us a new variety $\hat \cL_X$.
 Let $\pi$ be the projection
\begin{align*}
\pi: \mathbb{P}^{n}\times\mathbb{P}^{n+c} &\to  \mathbb{P}^{n}\times\mathbb{P}^{n}\\
(p,(u:\lambda)) &\mapsto (p, u).
\end{align*}
By properties (3) and (4), the map $\pi$ restricted to $\hat \cL_{X}$ is a birational map between $\hat \cL_{X}$ and $\cL_{X}$.

Since the map $pr_2: \cL(X) \to \mathbb{P}^n_{u}$ is generically finite to one (by Theorem 1.6 in \cite{HS13}) with degree equal to the MLdegree($X$), then $pr_2 \circ \pi: \hat \cL(X) \to \mathbb{P}^n_u$ is generically finite to one with degree equal to MLdegree($X$). This gives us the statement of Property (5) as desired.
\end{proof}

By the proof above for Proposition \ref{prop:properties}, we see that
 $$\cL(X)= \pi(\hat \cL_X).$$
The implication of this equality is that by studying the Lagrange likelihood equations, we are in fact studying fibers of the projection $pr_2: \PP_p^{n} \times \PP_u^n \to   \PP_u^{n}$.

\begin{rem}\label{parasites}
When $X$ is not a complete intersection, the Lagrange likelihood equations may have extraneous solutions that are \textit{not} points in $X$. These extraneous solutions can be handled by either filtering the solutions using a membership test or by replacing equations in \eqref{eq:LLE1} with a full set of defining equations for $X$. 
\end{rem}


We conclude this section with an example of using the Lagrange likelihood equations to find critical points of $L_u$.
\begin{example}
Let $X=\Gr_{2,6}\subset\PP^{14}$ be the variety defined by 
\[p_{ij}p_{kl}-p_{ik}p_{jl}+p_{ij}p_{jk},\quad1\leq i<j<k<l\leq 6.\]
The Grassmannian  $\Gr_{2,6}$ parameterizes lines in the projective
space $\mathbb{P}^{5}$. It has codimension $6$ and is not a complete intersection. 
However, the $6$ polynomials $h_1,\dots,h_6$ below 
\[
\begin{array}{cc}
p_{36}p_{45}-p_{35}p_{46}+p_{34}p_{56},& p_{25}p_{34}-p_{24}p_{35}+p_{23}p_{45},\\
p_{15}p_{34}-p_{14}p_{35}+p_{13}p_{45}, & p_{26}p_{45}-p_{25}p_{46}+p_{24}p_{56},\\
p_{16}p_{45}-p_{15}p_{46}+p_{14}p_{56}, & p_{14}p_{23}-p_{13}p_{24}+p_{12}p_{34}\,
\end{array}
\]
define a reducible variety that has $\Gr_{2,6}$ as an
irreducible component (the other components live in the 
coordinate hyperplanes). 
The system of equations $\LL\left({X},u\right)$ consists of $21$ equations: the $6$ equations $h_i=0$ for $i=1,\dots,6$ and the $15$ equations below given by $1\leq i<j$, 
$${u_{ij} }- {u_{+}p_{ij}}  =  p_{ij}\left(\lambda_1 \cdot \frac{\partial h_1}{\partial p_{ij}} + \ldots + \lambda_6 \cdot \frac{\partial h_6}{\partial p_{ij}}\right).$$

Solving $\LL\left({X},u\right)$,  using {\tt Bertini}, we find $156$ regular isolated solutions with $p\in X$. 
 Thus, by Proposition \ref{prop:properties} the ML degree of $X$ is 156.

\end{example}

\section{Sampling and model zeros}

In this section, we determine what happens when the data vector $u$ contains zero entries.  By understanding the  maximum likelihood estimation problems for sampling and model zeros we gain insight into the ML degree of a variety $X$.

For a subset $S\subseteq\{0, 1, \ldots, n\}$,
we define 
$$U_{S}:=\{u\in\mathbb{P}^{n} \mid u_{i}=0\,\text{if }i\in S\text{ and nonzero otherwise}\}.$$
For ease of notation, we  define $U:=U_{\emptyset}$.


The set $U_{S}$ specifies which entries of the data vector are zero, each zero entry can be either a sampling zero or model zero. A sampling zero at cell $i$ changes the likelihood function since the monomial $p_i^{u_i}$ no longer appears in $l_u$.  In the case of a model zero at cell $i$, the model zero is not considered as part of the data, and thus, the likelihood function is changed as well: $p_i^{u_i}$ no longer appears in the function and $p_i$ is set to zero in $p_+$.  Below, we make precise how the maximum likelihood estimation problem changes in the presence of model zeros and sampling zeros  and describe the maximum likelihood estimation problem on $X$ for data $u\in U_{S}$
with model zeros $R$.

Let $S \subseteq \{0,1, \ldots, n \}$ and $R \subseteq S$ and consider the following modified likelihood function
$$L_{u,S}:=\prod_{i\not \in S}p_i^{u_i}/p_+^{u_+}.$$
The set  $X_{R}:=X\cap\{p \in \PP^n \mid p_{i}=0\text{ for all }i\in R\}$ will be called the \emph{model zero variety} for $X$ and $R$.
 We consider $X_R$ as a projective variety in $\PP^{n-|R|}$  and define  $\cH_R$ as the set of points in $\PP^{n-|R|}$ where $\left( \prod_{i\not\in R} p_i \right) \cdot p_+$ vanishes. The model zero variety $X_R$ is called \emph{proper} if  the codimension of  $X_R \subset\PP^{n-\mid R\mid}$ equals the  codimension of $X\subset\PP ^n$.  
 \begin{defn}
The \emph{maximum likelihood estimation problem on $X$ for data $u\in U_{S}$
with model zeros $R$}, denoted $ML_{R,S}$, is to determine the critical points of $L_{u,S}$ on $X_R\setminus \cH_R$.  The MLdegree $\left(X_{R},S\right)$ is defined to be the number
of critical points of $L_{u,S}$ on $X_{R}\setminus \cH_R$ for generic $u \in U_S$ when $X_R$ is proper and zero otherwise. 
\end{defn}
In terms of the likelihood correspondence, the MLdegree$\left(X_{R},S\right)$ 
is the cardinality  of the subset of points $(p,u)$ of $pr_2^{-1}(u)$  such that $p_i=0$ for all $i\in R$ for generic  $u\in U_S$.
Whenever $R=S$, then $\mldegree(X_R,S)$ simply equals $\mldegree(X_R\subset\PP^{n-\mid R\mid})$.
In terms of optimization, the $\mldegree\left(X_{R},S\right)$ gives an upper bound on the local maxima  of
$l_{u,S}:=\prod_{i \notin S} p_i^{u_i}$ on  ${\cal M }\cap \{p_i=0 \text { for all }i\in R \}$. 

\medskip

Next,  we take the time to explain the subtleties of sampling zeros, model zeros, and structural zeros.
When given a model $\mathcal{M}$ with closure $X$ and structural zeros $R$, common practice is to optimize $l_{u,R}$ restricted to $\pi_R(X)$, the closure of the \emph{projection} of $X$ onto all coordinates not indexed by $R$ \cite{BFH75}\cite{Rap06}. In contrast, given a model $\mathcal{M}$ with closure $X$ and \emph{model} zeros $R$, the goal is to optimize $l_{u,R}$  restricted to $X_R$.  In general, $\pi_R(X) \neq X_R$, and so, the number of critical points will differ.  We illustrate the differences in the next example. 


\begin{notation} We use $S$ to denote the indices of the data zeros in $u$ and $R \subset S$ to denote the indices of the model zeros.  While we defined $S \subset \{0, 1, \ldots, n \}$, in some examples, it is more natural to index the entries of $u$ by ordered pairs.  In this case, $S$ will be a set of ordered pairs indicating the positions of the data zeros and $R$ will be a set of ordered pairs indicating the positions of the model zeros.
\end{notation}






\begin{example}\textbf{[Model, sampling, and structural zeros]}\label{ex:allGood}
Let $X$ denote the set of $3\times 4$ matrices of rank $2$ in $\PP^{11}$.  The defining ideal of $X$ is generated by the four $3 \times 3$ minors of $p$.
The ML degree of $X$ is $26$. 
 
Now let $u_{11}$ be a model zero in the contingency table $u$.  In this case, $R=\{(1,1)\}$ and the defining ideal of $X_R$ is
\begin{align*}
I(X_R)= \langle & p_{12}p_{21}p_{33}+p_{12}p_{23}p_{31}+p_{13}p_{21}p_{32}-p_{13}p_{22}p_{31}, \\
&p_{12}p_{21}p_{34}+p_{12}p_{24}p_{31}+p_{14}p_{21}p_{32}-p_{14}p_{22}p_{31}, \\
&p_{13}p_{21}p_{34}+p_{13}p_{24}p_{31}+p_{14}p_{21}p_{33}-p_{14}p_{23}p_{31}, \\
&p_{12}p_{23}p_{34} - p_{12}p_{24}p_{33}-p_{13}p_{22}p_{34} +p_{13}p_{24}p_{32}+p_{14}p_{22}p_{33}-p_{14}p_{23}p_{32} \rangle.
\end{align*}
The MLdegree$(X_R)=13$.

When $u_{11}$ is a structural zero, we follow \cite{Rap06} and eliminate $p_{11}$ from the ideal $I(X)$ to obtain the defining ideal of $\pi_R(X)$, 
\begin{align*}
I(\pi_R(X)) = &\langle p_{12}p_{23}p_{34} - p_{12}p_{24}p_{33}-p_{13}p_{22}p_{34} +p_{13}p_{24}p_{32}+p_{14}p_{22}p_{33}-p_{14}p_{23}p_{32} \rangle.
\end{align*}
Optimizing over $\pi_R(X)$, yields 10 complex critical points, which matches the ML degree for $3\times 3$ rank $2$ matrices. 
 \end{example}

We now come to the description of the special fiber $pr_2^{-1}(u)$ when $u$ is a generic data vector in $U_S$, which connects this work with the likelihood correspondence  of \cite{HS13}.  

\begin{thm}\label{thm:subproblems}  Let $u$ be a generic data vector in $U_S$ for some $S \subseteq \{0, \ldots, n\}$. Let $X\subseteq\mathbb{P}^{n}$ be a codimension $c$ irreducible component of a projective variety defined by a reduced regular sequence of homogeneous polynomials $h_{1},\dots,h_{c}$. Let $X_R$ be a proper model zero variety for all $R \subseteq S$. Then,  the special fiber $pr_2^{-1}(u)$ contains the critical points of the problem $ML_{R,S}$ for all $R \subseteq S$.

Moreover, if $(p,u) \in pr_2^{-1}(u)$ with $p\in (X_R)_{reg} \setminus \cH_R$ then $p$ is a critical point of the problem $ML_{R,S}$ for some $R \subseteq S$.
\end{thm}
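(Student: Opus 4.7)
The plan is to reduce both directions to Proposition \ref{prop:properties} via the identification $\cL_X = \pi(\hat{\cL}_X)$ established in its proof. The central observation is that for any $R \subseteq S$, setting $p_i = 0$ for all $i \in R$ in the Lagrange likelihood equations $\LL(X, u)$ collapses the equations of \eqref{eq:LLE2} indexed by $i \in R$ to the tautology $0 = 0$ (since $u_i = 0$ for $i \in R \subseteq S$), while the remaining equations, together with the vanishing of $h_1, \ldots, h_c$ along the slice $\{p_i = 0 : i \in R\}$, become precisely $\LL(X_R, u|_{[n] \setminus R})$. This identification works because the partial derivative $\partial_i h_j$ evaluated at a point with vanishing $R$-coordinates coincides with the partial derivative, in the variable $p_i$ with $i \notin R$, of the restricted polynomial.

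For the forward inclusion, I would take a critical point $p$ of $L_{u,S}$ on $X_R \setminus \cH_R$ and apply Proposition \ref{prop:properties}(3) to $X_R \subset \PP^{n-|R|}$ with the restricted data to obtain a unique $\lambda$ solving $\LL(X_R, u|_{[n] \setminus R})$. Padding $p$ with zeros at the $R$-coordinates then gives a solution of $\LL(X, u)$ by the observation above, and $p \in X_R \subseteq X$; hence $(p, u, \lambda) \in \hat{\cL}_X$, and its image $\pi(p, u, \lambda) = (p, u)$ lies in $\cL_X$, placing $(p, u)$ in $pr_2^{-1}(u)$.

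For the converse, I would start with $(p, u) \in pr_2^{-1}(u) \subseteq \cL_X$ and lift to $(p, u, \lambda) \in \hat{\cL}_X$ via the birational projection $\pi$, so that $(p, \lambda)$ solves $\LL(X, u)$. Proposition \ref{prop:properties}(2) forces the vanishing locus of $p$ to lie inside $S$, and the hypothesis $p \in (X_R)_{reg} \setminus \cH_R$ pins this locus to exactly $R$, giving in particular $R \subseteq S$. Restricting $\LL(X, u)$ to the indices outside $R$ yields a solution of $\LL(X_R, u|_{[n] \setminus R})$, and Proposition \ref{prop:properties}(4) applied to $X_R$ (its regularity and nonvanishing hypotheses supplied by $p \in (X_R)_{reg} \setminus \cH_R$) produces $p$ as a critical point of $L_{u|_{[n] \setminus R}}$ on $(X_R)_{reg} \setminus \cH_R$. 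A final matching of likelihood functions, using that $u_i = 0$ for $i \in S \setminus R$ makes $L_{u|_{[n] \setminus R}} = L_{u,S}$ on $X_R$, recovers criticality for the problem $ML_{R,S}$.

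The main obstacle will be verifying that Proposition \ref{prop:properties} genuinely applies to $X_R$: one must check that the restricted polynomials $h_j|_{p_i = 0,\, i \in R}$ form a reduced regular sequence of codimension $c$ for which $X_R$ appears as an irreducible component. The properness assumption on $X_R$ forces the codimension count, and if the na\"ive restriction is defective one can replace the original $h_j$ with generic linear combinations of minimal generators of $I(X)$ as discussed after the definition of $\LL(X, u)$. A secondary subtlety in the converse direction is that Proposition \ref{prop:properties}(4) carries a regular-isolated hypothesis on the LL solution, which holds for $u$ generic in $U_S$ once one discards the parasitic components flagged in Remark \ref{parasites}.
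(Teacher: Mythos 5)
Your proposal follows essentially the same route as the paper: the core of both arguments is that substituting $p_i=0$ for $i\in R$ (and $u_i=0$ for $i\in S$) into $\LL(X,u)$ yields the Lagrange system for $X_R$ with the sampling-zero likelihood, that the critical-point/Lagrange-multiplier correspondence of Proposition \ref{prop:properties}(3)--(4) (i.e.\ the $T_p^{\perp}$ characterization) then identifies critical points of $ML_{R,S}$ with solutions of $\LL(X,u)$ lying in the fiber via $\pi(\hat\cL_X)=\cL_X$, and that Proposition \ref{prop:properties}(2) pins the vanishing locus of $p$ inside $S$ so that running over all $R\subseteq S$ accounts for every point of the fiber. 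Your flagged caveats (the reduced-regular-sequence hypothesis for the restricted $h_j$ and the regular-isolated hypothesis in property (4)) are real but are treated at the same level of informality in the paper's own proof.
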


\begin{proof} 
Most of the work of this proof comes from the formulation of the Lagrange likelihood equations. First, note that for a variety $Y \subseteq \PP^n$ and $u \in U_{S'}$ for $S' \subseteq \{0,1,\ldots, n\}$, the point $p \in Y_{reg} \setminus \cH$ is a critical point on $Y$ for $l_{u,S'}$  if and only if the linear subspace $T_p^{\perp}$ contains the point $v \in \PP^{n-|R|}$ where 
$$v_i = \begin{cases}
    \frac{u_i}{p_i}-\frac{u_+}{p_+}   & \text{if } i \notin S, \\
      - \frac{u_+}{p_+} & \text{if } i \in S.
\end{cases}.$$
This condition results in the same equations as in $\LL(Y,u)$ when $u_i=0$ for all $i \in S$ and $p_i$ is assumed not to be zero when $i \notin S$.

Second, note that when we substitute $p_i=0$ in to $\LL(X,u)$, we get the equations for $\LL(X_R,u)$. Thus, by substituting $p_i = 0$ for $i \in R$ and $u_i=0$ for $i \in S$ into $\LL(X,u)$, we get a system of equations whose solutions are the critical points of $L_{u,S}$ on $X_R$.

Thus, if $X_R$ is a proper model zero variety and $p \in (X_R)_{reg}$,
then $p$ is a critical point on $(X_R)_{reg}$ for $L_{u,S}$ if and only if there exists a $\lambda$ such that $(p,\lambda)$ is an isolated solution to $\LL(X, u)$.

From Proposition \ref{prop:properties}, we know $u_i \neq 0$ implies $p_i \neq 0$, thus, we can account for all solutions to $\LL(X,u)$ since we consider every subset $R \subseteq S$.\end{proof}

In the proof of Theorem \ref{thm:subproblems}, we also proved the following statement (Proposition \ref{thm:dictionary}).  We state Proposition \ref{thm:dictionary} separately in order to highlight the equations for $ML_{R,S}$.

\begin{prop}\label{thm:dictionary}
Fix $u\in U_{S}$. Let $X\subseteq\mathbb{P}^{n}$ be a codimension $c$ irreducible component of a projective variety defined by a reduced regular sequence of homogeneous polynomials $h_{1},\dots,h_{c}$.
Whenever $X_R$ is proper, the critical points of $L_{u,S}$ restricted to $X_R$ are  regular isolated solutions
of the equations:
\begin{equation}\label{eq:LLE1R}
\begin{array}{c}
h_{1}=h_2=\cdots=h_{c}=0\\
p_i=0 \text{ for } i\in R, \text{ and }
\end{array}
\end{equation}
\begin{equation}\label{eq:LLE2R}
\begin{array}{rcll}
u_{+}&=&\left( \lambda_{1}\partial_{i}h_{1}+\lambda_{2}\partial_{i}h_{2}+\cdots +\lambda_c\partial_{i}h_{c}\right)& \text{ for } i\in S\setminus R\\
\left(u_{+}p_{i}-p_{+}u_{i}\right)&=&p_i \left( \lambda_{1}\partial_{i}h_{1}+\lambda_{2}\partial_{i}h_{2}+\cdots +\lambda_c\partial_{i}h_{c}\right) &\text{ for } i\not\in S\\
\end{array}
\end{equation}
Moreover, the solutions to \eqref{eq:LLE1R} and \eqref{eq:LLE2R} for all $R\subseteq S$ account for all the solutions to $\LL\left(X,u\right)$.
\end{prop}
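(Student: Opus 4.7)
The plan is essentially to specialize the derivation of $\LL(X,u)$ from the proof of Theorem \ref{thm:subproblems} by substituting $u_i=0$ for $i\in S$ and $p_i=0$ for $i\in R$, and then read off the surviving equations case by case. The role of the proposition is to make explicit the system that governs the $ML_{R,S}$ problem.

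First I would set up the Lagrangian formulation of criticality for $L_{u,S}$ on $X_R$. Since $X_R$ is proper, the polynomials $h_1,\dots,h_c$ (viewed after imposing $p_i=0$ for $i\in R$) still form a regular sequence, so their gradients span the conormal space $T_p(X_R)^{\perp}$ at any regular point. Writing $\log L_{u,S}=\sum_{i\notin S}u_i\log p_i - u_+\log p_+$ and differentiating, one gets $\partial_i\log L_{u,S}=u_i/p_i - u_+/p_+$ for $i\notin S$ and $\partial_i\log L_{u,S}=-u_+/p_+$ for $i\in S\setminus R$. A point $p\in (X_R)_{reg}\setminus\cH_R$ is critical iff these components coincide with $\sum_j\lambda_j\partial_i h_j(p)$ for some $\lambda_1,\dots,\lambda_c$. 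Clearing denominators (multiplying through by $p_ip_+$ in the $i\notin S$ case, and dividing out $p_i\neq 0$ followed by a rescaling of the $\lambda_j$'s in the $i\in S\setminus R$ case) recovers exactly the equations in \eqref{eq:LLE2R}, and the explicit constraints $p_i=0$ for $i\in R$ together with the defining equations $h_j=0$ give \eqref{eq:LLE1R}.

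For the moreover clause, I would invoke Proposition \ref{prop:properties}(2), which forces the zero-locus $R:=\{i:p_i=0\}$ of any solution $(p,\lambda)$ of $\LL(X,u)$ with $u\in U_S$ to satisfy $R\subseteq S$. Assigning each solution to its own $R$ partitions the solution set of $\LL(X,u)$ into disjoint strata indexed by subsets $R\subseteq S$, and inside each stratum the equations of $\LL(X,u)$ collapse (via the same substitutions above, with trivial $0=0$ identities discarded) to \eqref{eq:LLE1R}-\eqref{eq:LLE2R}.

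The main subtlety is the bookkeeping of the denominator-clearing: one must verify that the rescalings by $-1/p_+$ and $1/(p_ip_+)$ are valid, which they are because we work on $X_R\setminus\cH_R$, where $p_+$ and each $p_i$ with $i\notin R$ are nonzero. The properness hypothesis does the remaining work, since it guarantees that the Jacobian of $(h_1,\dots,h_c)$ restricted to $\{p_i=0:i\in R\}$ has full rank $c$ at regular points, so the Lagrange multipliers are uniquely determined and the critical points correspond to regular isolated solutions of the cleared system, matching the phrasing of the statement.
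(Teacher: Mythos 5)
Your proposal is correct and follows essentially the same route as the paper, which derives this proposition as a byproduct of the proof of Theorem \ref{thm:subproblems}: characterize criticality of $L_{u,S}$ on $X_R$ via the conormal/gradient condition, clear denominators (valid off $\cH_R$) to obtain \eqref{eq:LLE1R}--\eqref{eq:LLE2R}, and use property (2) of Proposition \ref{prop:properties} to see that every solution of $\LL(X,u)$ has its zero set $R=\{i:p_i=0\}$ contained in $S$, so the strata indexed by $R\subseteq S$ exhaust all solutions.
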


An important consequence of  Theorem \ref{thm:subproblems} is  that we can use a parameter homotopy to take the solutions of $\LL(X,u)$ for $u\in U$ to the solutions of $\LL(X,v)$ for $v\in U_S$. Such methods are discussed in \cite{SW05}  and can be implemented in {\tt Bertini} \cite{BHSW06} or {\tt PHCpack} \cite{Ver99}. Doing so, we solve $2^{\mid S\mid}$ different optimization problems corresponding to the $2^{\mid S\mid }$ subsets of $S$. 
In the case $\mid S\mid=1$,  we get the following corollary. 

\begin{corollary} {\em \textbf{[ML degree bound]}} \label{prop:hsConjecture}
Suppose  $S=\{n\}$ and $X \subset \PP^n$ is an irreducible projective variety.  
Then for generic  $u\in U_S$, we have
\[ \mldegree(X) \geq \mldegree (X_S) +\mldegree (X, S) \]
Moreover, when $X$ is a generic complete intersection, the inequality becomes an equality.
\end{corollary}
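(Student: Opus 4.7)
The plan is to split the proof into two parts: first establishing the inequality for any irreducible $X$ via Theorem \ref{thm:subproblems} combined with upper semi-continuity of fiber cardinality, then upgrading to equality when $X$ is a generic complete intersection by verifying that no isolated solutions of the Lagrange likelihood equations are lost under the specialization $u\in U \rightsquigarrow u\in U_S$.

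For the inequality, I would fix a generic $u\in U_S$ and apply Theorem \ref{thm:subproblems} to the two subsets $R=\emptyset$ and $R=S$ of $S=\{n\}$. This yields
\[
pr_2^{-1}(u)\;\supseteq\;\{\text{critical points of } ML_{\emptyset,S}\}\;\sqcup\;\{\text{critical points of } ML_{S,S}\},
\]
and the two sets are disjoint because points of the first satisfy $p_n\neq 0$ while points of the second satisfy $p_n=0$. By definition, for generic $u\in U_S$ the first set has exactly $\mldegree(X,S)$ elements and the second has $\mldegree(X_S,S)=\mldegree(X_S)$ (the latter equality holding because $R=S$). Since $pr_2:\cL_X\to \PP^n_u$ is generically finite of degree $\mldegree(X)$ and the fiber over generic $u\in U_S$ is zero-dimensional (each $ML_{R,S}$ contributing only finitely many critical points), upper semi-continuity of fiber cardinality yields $|pr_2^{-1}(u)|\leq \mldegree(X)$, giving the desired inequality.

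For the equality in the generic complete intersection case, the Lagrange likelihood equations form a parameterized square system, and Proposition \ref{prop:properties}(5) identifies the generic solution count with $\mldegree(X)$. Proposition \ref{thm:dictionary} partitions the isolated solutions of $\LL(X,u)$ for generic $u\in U_S$ into the critical-point sets of $ML_{R,S}$ indexed by $R\subseteq S$. Equality then reduces to showing the specialization $u_n\to 0$ preserves the isolated-solution count, so that the fiber of $pr_2$ over a generic point of $U_S$ still has cardinality $\mldegree(X)$. I would establish this via a coefficient-parameter homotopy: for a generic complete intersection, the system $\LL(X,u)$ behaves flatly along a generic linear path from a generic $u\in U$ to a generic $v\in U_S$, and genericity of the defining polynomials $h_1,\dots,h_c$ rules out solutions escaping to infinity or to coordinate hyperplanes beyond those already counted by Proposition \ref{thm:dictionary}.

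The main obstacle is precisely this no-loss-of-solutions step in the equality claim. The inequality is essentially immediate from Theorem \ref{thm:subproblems} and standard semi-continuity, but recovering the full $\mldegree(X)$ in the specialized fiber requires genuinely exploiting the generic complete intersection hypothesis to exclude degenerate limiting behavior. A natural route would be induction on the codimension $c$, using that $X_S$ is itself a generic complete intersection in $\PP^{n-1}$ of the same degrees, coupled with a Bertini-style genericity argument on the coefficients of $h_1,\dots,h_c$ to rule out boundary solutions at $u_n=0$.
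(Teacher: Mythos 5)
Your overall architecture matches the paper's: Theorem \ref{thm:subproblems} supplies the disjoint sets of critical points of $ML_{\emptyset,S}$ and $ML_{S,S}$ inside the special fiber, and the content of the corollary is that their total is bounded by the generic fiber count, with equality for a generic complete intersection because nothing degenerates under the specialization $u\rightsquigarrow U_S$. Your sketch of the equality step is also in the spirit of the paper, which argues that the exceptional parameter set $\mathcal{E}$ where the solution count drops is a proper algebraic subset of the parameter space and that genericity of the complete intersection guarantees $U_S\not\subseteq\mathcal{E}$; the induction on codimension you propose is not needed.

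However, the step you call ``essentially immediate'' contains a genuine gap: the cardinality of fibers of a proper, generically finite morphism is \emph{not} upper semi-continuous. The normalization of a nodal plane cubic is a degree-one finite morphism from an irreducible curve whose fiber over the node has two points, and the same phenomenon can occur for $pr_2\colon\cL_X\to\PP^n_u$ over non-generic $u$. Moreover, Theorem \ref{thm:subproblems} only describes the points of $pr_2^{-1}(u)$ lying in $(X_R)_{reg}\setminus\cH_R$, so you cannot even conclude that the full fiber over a generic $u\in U_S$ is finite. The paper's route around this is the coefficient-parameter homotopy theorem (\cite{SW05}, Theorem 7.1.1; see also Section 6.5 of \cite{BHSW13}) applied to the square parameterized system $\LL(X,u)$: every \emph{regular isolated} solution at a special parameter value deforms, by the implicit function theorem, to a distinct regular isolated solution at nearby generic parameters, so the number of such solutions can only decrease as the parameter space is restricted to $U_S\subset U$. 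Since the critical points of $ML_{\emptyset,S}$ and $ML_{S,S}$ are regular isolated solutions of $\LL(X,u)$ by Proposition \ref{thm:dictionary}, and for generic $u$ the relevant solution count equals $\mldegree(X)$ by Proposition \ref{prop:properties}(5), the inequality follows. Your argument should be rerouted through this deformation statement for the Lagrange system rather than through fiber cardinality of $pr_2$.
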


\begin{proof} This follows from Theorem \ref{thm:subproblems} and the fact that the number of solutions to a parameterized family of polynomial systems for a generic choice of parameters can only decrease on nested parameter spaces (see Section 6.5 of \cite{BHSW13}).  Equality holds when $u$ remains off an exceptional subset $\mathcal{E} \subset U$ which is defined by an algebraic relation among the $p$ coordinates and $u$ coordinates \cite{SW05}[Theorem 7.1.1].  Since $X$ is a generic intersection, we have $U_S$ is not strictly contained in $\mathcal{E}$, and the equality holds.\end{proof}

As we can see from Corollary \ref{prop:hsConjecture}, solutions to $\LL(X, u)$ with $u \in U_S$ get partitioned into sampling zero and model zero solutions, in fact, we see this same behavior even as we increase the size of $S$.  We encode $\mldegree\left(X_{R},S\right)$ for all possible choices of $(R,S)$ in a table called the \emph{ML table} of $X$ whose rows are indexed by $R\subset \{0, 1, \ldots, n\}$ and whose columns are indexed by $S\subset \{0,1, \ldots, n\}$. Due to space considerations,  in our examples, we  often only print partial ML tables, i.e. that is subtables of the complete ML table.


\begin{example}\label{eg:rank2tables1}  Let $X \subset \PP^8$ be the projectivization of all $3 \times 3$ matrices of rank 2. A partial ML table of $X$ is below.
\[
\begin{array}{ccccc}
R\backslash S  & \{\} & \{11\} & \{12\} & \{11,12\}\\
\{\} & 10 & 5 & 5 & 1\\
\{11\} &  & 5 &  & 4\\
\{12\} &  &  & 5 & 4\\
\{11,12\} &  &  &  & 1
\end{array}.
\]
\end{example}

In Example \ref{eg:rank2tables1}, each  of the columns of the $\mltable (X)$ sum to $\mldegree (X)$. This does not happen for all varieties, but, in general, the column sums are lower bounds of the ML degree of $X$.

\begin{corollary}\label{cor:columns}
The column sums of the ML table of $X$ are less than or equal to 
$\mldegree(X)$, meaning  $$\mldegree(X)\geq\sum_{R\subseteq S}\mldegree(X_R,S).$$
Moreover, when $X$ is a generic complete intersection, the inequality becomes an equality.
\end{corollary}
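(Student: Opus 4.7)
The plan is to bootstrap Corollary \ref{prop:hsConjecture} to arbitrary $S$ by combining the fiber description of Theorem \ref{thm:subproblems} with a parameter-continuation argument of the sort invoked in the $|S|=1$ case.

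First I would fix a generic $u \in U_S$ and observe, using Theorem \ref{thm:subproblems}, that the intersection of the fiber $pr_2^{-1}(u)$ with $\bigcup_{R \subseteq S}\bigl((X_R)_{reg}\setminus \cH_R\bigr)$ is precisely the union, over all $R \subseteq S$, of the critical points of $ML_{R,S}$. This union is in fact a disjoint union, because the subset $R$ of vanishing $p$-coordinates is determined by the point $p$ itself (by Property~(2) of Proposition \ref{prop:properties}, if $p_i=0$ then $u_i=0$, and conversely the problem $ML_{R,S}$ only produces critical points $p$ with $p_i=0$ for all $i\in R$). Hence
\begin{equation*}
\#\,pr_2^{-1}(u) \;\geq\; \sum_{R\subseteq S}\mldegree(X_R,S).
\end{equation*}

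Next I would bound the left-hand side by $\mldegree(X)$. By Property~(5) of Proposition \ref{prop:properties}, the cardinality of $pr_2^{-1}(u)$ for generic $u \in U$ equals $\mldegree(X)$, realized as the isolated solution count of the parameterized square system $\LL(X,u)$. The data $u \in U_S$ is a specialization of the parameters to a subspace, so the coefficient-parameter principle (Section 6.5 of \cite{BHSW13}) guarantees that the generic isolated solution count can only decrease or stay the same upon restriction to a subspace of parameters. This gives the desired inequality
\begin{equation*}
\mldegree(X)\;\geq\;\sum_{R\subseteq S}\mldegree(X_R,S).
\end{equation*}

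For the equality statement when $X$ is a generic complete intersection, I would argue, as in the proof of Corollary \ref{prop:hsConjecture}, that no solution paths of the coefficient-parameter homotopy are lost under the specialization. Precisely, the locus $\mathcal{E} \subset U$ where a solution is not regular or escapes to the base locus is a proper algebraic subset cut out by polynomial relations in the $u$ and $p$ coordinates; because the defining polynomials of $X$ are generic, none of these relations is forced to vanish identically on the coordinate flat $U_S$, so $U_S \not\subseteq \mathcal{E}$. Thus a generic $u \in U_S$ still yields exactly $\mldegree(X)$ solutions, and combined with the disjoint decomposition of the fiber above, the inequality becomes equality.

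The main obstacle is the second, genericity, step: one must check that for a generic complete intersection the exceptional set $\mathcal{E}$ does not swallow $U_S$, and that every solution $(p,\lambda)$ of $\LL(X,u)$ for $u\in U_S$ actually lies in one of the loci $(X_R)_{reg}\setminus \cH_R$ (rather than in a singular stratum). The first point follows from the fact that the coefficients of the $h_i$ appear as free parameters of the extended family; the second follows because the singular locus of a generic complete intersection has strictly smaller dimension than $X$ and is swept out by only lower-dimensional fibers under $pr_2$.
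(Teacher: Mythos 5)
Your argument is correct and follows essentially the same route the paper intends: the disjoint decomposition of the fiber $pr_2^{-1}(u)$ over generic $u \in U_S$ via Theorem \ref{thm:subproblems}, the coefficient-parameter bound from Section 6.5 of \cite{BHSW13} giving $\#\,pr_2^{-1}(u) \leq \mldegree(X)$, and the genericity argument that $U_S \not\subseteq \mathcal{E}$ for the equality case, exactly as in the proof of Corollary \ref{prop:hsConjecture}. Your proposal simply makes explicit the extension from $|S|=1$ to arbitrary $S$ that the paper leaves implicit.
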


The inequality in Corollary \ref{cor:columns} above can be strict as the next example shows. 
\begin{example}
Let $f=p_{0}^{3}+p_{1}^{3}+p_{2}^{3}+p_{3}^{3}$ define a hypersurface $X\subset\mathbb{P}^{3}$.
Some of the entries of the MLtable of $X$ are below. We have $\mldegree(X)=30$ but for $S=\{0,1\}$, we have $\sum_{R\subseteq S}\mldegree(X_R,S)=28$.

\[
\begin{array}{rcccc}
R\backslash  S & \{\} & \{0\} & \{0,1\} \\
\{\} & 30 & 21 & 12 \\
\{0\} &  & 9 & 7 \\
\{1\} &  &  & 7 \\
\{0,1\} &  &  & 2 \\
\end{array}
\]
\end{example}

\begin{rem}
Our definition for the entries of the ML table ignores multiplicities and singularities of the variety. We only take account regular isolated solutions.   An interesting research direction would be to take into account multiplicities to obtain an equality in the statement of Corollary \ref{prop:hsConjecture}. 
\end{rem}

We conclude this section with a full description of the ML table for a generic hypersurface of degree $d$ in $\mathbb{P}^n$.

\begin{thm}{\label{thm:formulaHypersurface}}
Suppose $X$ is a generic hypersurface of degree $d$ in $\mathbb{P}^{n}$
and let $s=\mid S\mid$ and $r=\mid R\mid$. Then 
\[
\mldegree\left(X_{R},S\right)=\begin{cases}
\frac{d}{d-1}\left(d^{n-s}-1\right) & s=r\\
d^{n- s+1}\left(d-1\right)^{s-r-1}, & s>r\\
0 & \text{otherwise. }
\end{cases}
\]
\end{thm}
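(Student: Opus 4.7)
The plan is to split by whether all data zeros are model zeros (the case $s=r$) or whether there is at least one sampling zero (the case $s>r$); since $R\subseteq S$ the case $s<r$ cannot occur, and ``otherwise'' accounts for degenerate situations (e.g., $X_R$ is empty or improper), where $\mldegree(X_R,S)$ is zero by definition.

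For $s=r$, every coordinate zero of $u$ is a model zero, so $ML_{R,S}$ is ordinary MLE on $X_R$ against a generic data vector on the $n-r+1$ surviving coordinates. By genericity of $X$, $X_R=X\cap\{p_i=0:i\in R\}$ is itself a generic hypersurface of degree $d$ in $\PP^{n-r}$, and the ML degree of such a hypersurface is the classical value $d+d^2+\cdots+d^{N}=\frac{d}{d-1}(d^{N}-1)$ with $N=n-r=n-s$ (e.g.\ via the signed Euler characteristic computation used in \cite{HS13}), giving the first case.

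For $s>r$ I would first use the fact that $X_R$ is itself a generic degree $d$ hypersurface in $\PP^{n-r}$ to reduce to the case $r=0$. Writing $(N,t):=(n-r,s-r)$, it suffices to compute $f(N,t):=\mldegree(Y,T)$ for a generic degree $d$ hypersurface $Y\subset\PP^{N}$ with $|T|=t\ge 1$ sampling zeros and no model zeros. Since $Y$ is a hypersurface (hence a generic complete intersection), Corollary \ref{cor:columns} applies with equality and yields
\begin{equation*}
\frac{d}{d-1}(d^{N}-1)=\mldegree(Y)=\sum_{r'=0}^{t}\binom{t}{r'}\,f(N-r',t-r'),
\end{equation*}
where the binomial coefficients count subsets $R'\subseteq T$ of each size (using symmetry of the generic case), and $f(N-t,0)=\frac{d}{d-1}(d^{N-t}-1)$ is the ``$s=r$'' case established above.

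I would then induct on $t$. The base case $t=1$ gives $f(N,1)=\frac{d}{d-1}(d^{N}-d^{N-1})=d^{N}$, agreeing with $d^{N-t+1}(d-1)^{t-1}$. For the inductive step, substituting $f(N-r',t-r')=d^{N-t+1}(d-1)^{t-r'-1}$ for $1\le r'<t$ into the recurrence reduces the claim to the algebraic identity
\begin{equation*}
d^{t}-1=\sum_{k=1}^{t}\binom{t}{k}(d-1)^{k},
\end{equation*}
which is immediate from the binomial expansion of $d^{t}=((d-1)+1)^{t}$. The only real work is this bookkeeping; the substantive inputs, namely the classical ML degree of a generic hypersurface together with Corollary \ref{cor:columns}, are already available, so there is no serious geometric obstacle — the main (mild) challenge is just keeping the recurrence aligned with the closed form.
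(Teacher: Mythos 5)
Your proposal is correct and follows essentially the same route as the paper: both rest on the slicing reduction (a hyperplane section of a generic degree $d$ hypersurface is again generic, so $\mldegree(X_R,S)$ depends only on $d$, $n-r$, $s-r$), the column-sum equality of Corollary \ref{cor:columns} for generic complete intersections, the classical value $\frac{d}{d-1}(d^{n}-1)$, and the binomial expansion of $((d-1)+1)^{t}$. The only cosmetic difference is that you normalize to $r=0$ and induct on the number of sampling zeros $t=s-r$, whereas the paper inducts on $n$; the underlying recurrence is identical.
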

\begin{proof}
Since the entries of the ML table of generic degree $d$ hypersurfaces  $X\subset\PP^n$ depend only on $d,n,$ and the
size of $R$ and $S$, we ease notation and let 
$$\shortM(r,s,n):=\mldegree\left(X_{R}\subset\mathbb{P}^{n},S\right).$$
By Proposition $\ref{thm:dictionary}$, it follows 
\begin{equation}\label{eq:goingDown}
\shortM(r,s,n+1)=\shortM(r-1,s-1,n)
\text{ for }r,s\geq1
\end{equation}
because a section of a generic hypersurface projected into a smaller projective space is again a generic degree $d$ hypersurface. 
We will use \eqref{eq:goingDown} to induct on $n$.

Recall by \cite{HKS05},  the ML degree of a generic degree $d$ hypersurface
in $\mathbb{P}^{n}$ is $\frac{d}{d-1}\left(d^{n}-1\right)$. 
When
$s=r$, we have $\shortM(r,s,n)=\frac{d}{d-1}\left(d^{n- s}-1\right)$
as desired. 
So for $n=2$,
$$\shortM(0,0,2)=\shortM(0,1,2)+\shortM(1,1,2).$$
Simple algebra reveals $\shortM(0,1,2)=d^{2}$.
With this we have shown  the theorem holds when $n=2$. 
To complete the proof by induction, we need only show 
$$\shortM(0,s,n+1)=d^{n- s+2}\left(d-1\right)^{s-r-1},\quad \text {for }0<s.$$ To show this we recall  
$$\mldegree( X\subset \PP^{n+1})=\sum_{R\subseteq S}\mldegree\left(X_{R}\subset\mathbb{P}^{n+1},S\right).$$
By induction and our work above, this equation becomes
\[
\begin{array}{ccl}
\shortM(0,0,n+1)&=&\shortM(0,s,n+1)+\shortM(s,s,n+1)+\\
&&\sum_{r=1}^{s-1}\binom{s}{r} \shortM(r-1,s-1,n)\\
\\
\frac{d(d^{n+1}-1)}{d-1}&=&\shortM(0,s,n+1)+\frac{d(d^{n+1-s}-1)}{d-1}+\\
&&\sum_{r=1}^{s-1}\binom{s}{r}d^{n-s+2}(d-1)^{s-r-1}.
\end{array}
\]
We solve for 
$\shortM(0,s,n+1)$ and use the binomial formula.\end{proof}


\section{Applications and Timings}

This section has five subsections.  The first subsection illustrates through examples how working with model zero varieties can decrease the required computing time for computing the ML degree.  The remaining four brief subsections focus on different applications: ML table homotopies, ML duality, tensors (multi-way tables), and Grassmannians.

\subsection{Timings}
In Examples \ref{ex:timings1} and \ref{ex:timings2} we compute a lower bound on the ML degree for several different varieties using the techniques from Section 2 and Section 3 and report on the timings.  All timings were done with a MacBook Pro having a 2.8 GhHz Intel Core i7 processor.

\begin{example}\label{ex:timings1}
Let $X$ be the hypersurface defined by
 $$p{}_{0}^{3}+2p_{1}^{3}+3p{}_{2}^{3}+5p{}_{3}^{3}+7p{}_{4}^{3}+11p{}_{5}^{3}+13p{}_{6}^{3}+17p{}_{7}^{3}.$$
 Using {\tt Macaulay2} and the variation of Algorithm 6 for complete intersections described in Section 4 of \cite{HKS05} to find the ML degree, we found that the computation did not complete within 24 hours.  The same was true using {\tt Macaulay2} to find the degree of the ideal defined by the Lagrange likelihood equations for a data vector with no zeros. But when the data vector had six zeros, we were able to make the following table of computations.

  
The columns of the table are labeled by the number of the six data zeros we considered as model zeros; the number of solutions to the system give by  \eqref{eq:LLE1R} and \eqref{eq:LLE2R}; and the timing of the computation using symbolic methods in \texttt{Macaulay2}. 
\[
\begin{array}{c}
\begin{array}{cccccccc}
\#R: & 0 & 1 & 2 & 3 & 4 & 5 & 6\\
\text{\# of Solutions:} & \textbf{3} & \textbf{9} & \textbf{18} & \textbf{36} & \textbf{72} & \textbf{144} & \textbf{288}\\
\text{Seconds:} & \leq 1 & \leq 1 & \leq 1 & \leq 1 & 4 & 85 & 2741
\end{array}
\end{array}
\]
Using this table and Corollary \ref{cor:columns}, the $\mldegree(X)$ is bounded below by
\[
1\cdot\textbf{3}+6\cdot\textbf{9}+15\cdot\textbf{18}+20\cdot\textbf{36}+15\cdot\textbf{4}+6\cdot\textbf{144}+1\cdot\textbf{288}=3279.
\]
\end{example}

\begin{example}\label{ex:timings2}
Let $X_n \subseteq \mathbb{P}^{3n-1}$ be defined by the $3\times 3$ minors of 
 the $3\times n$ matrix $[p_{ij}]$. In Section 4 of \cite{HRS12}, the ML degree of $X_n$ is conjectured to be $2^{n+1}-6$. The authors give supporting evidence up to $n=10$. 
 We add supporting evidence for $n=11,12,13, 14$.
 We take the data zeros to be $u_{13},u_{33}$, and $u_{2k}$ for $4\leq k\leq n$. 
Each column below contains $n$; the lower bound to the ML degree we compute; and computation time in minutes. 
Our computations were done symbolically in \texttt{Macaulay2} using the Lagrange Likelihood equations but with \eqref{eq:LLE1} taken to be all of the defining equations of $X$.
\[
\begin{array}{ccccccccc}
n:                        & 9       &10                   &11                  &12                   &13 & 14\\
\text{Bound:}   & 1018 & 2042 & \textbf{4090} & \textbf{8186}&\textbf{16378} & \textbf{32762} \\
\text{Min's:}     & \leq 1  & 4   & 12                & 30                 & 72                  & 194
\end{array}
\]
\end{example}

\subsection{ML table homotopy}

Let $X\subset\mathbb{P}^{n}$ be a generic complete intersection of
codimension $c$ defined by homogeneous polynomials $h_{1},\dots,h_{c}$. 
Let
$u$ be generic data vector in $U$, and let $u_{s}$
be a generic data vector in $U_{S}$ with $S \subseteq \{ 0, 1, \ldots, n\}$. 
Our first application of Corollary $\ref{prop:hsConjecture}$
is the construction of a homotopy to determine critical points of
$L_{u}$ on $X$. 
We determine the critical points of
$L_{u_{s},S}$ on $X\cap{\cal H}_{R}$ for each subset $R$ of $S$.
So rather than doing a single expensive computation to determine the
critical points of $L_{u}$ on $X$, we perform several
easier computations to determine critical points of $L_{u_{s},S}$. 
Doing so allows us to use Proposition \ref{thm:dictionary} to get the critical
points of $L_{u}$ using a coefficient-parameter homotopy.
The homotopy requires two steps.
Step 1 determines the start points by solving multiple systems of equations. 
Step 2 constructs the coefficient-parameter homotopy (see \cite{SW05}[\S 7]) that will do the path tracking.

\begin{example}
Let $X\subset\mathbb{P}^{3}$ be defined by $f=2p_{0}^{3}-3p_{1}^{3}+5p_{2}^{3}-7p_{3}^{3}$.
We note that $\mldegree (X)=39$ and the ML table of $X$ is:
\[
\begin{array}{rcccc}
R\backslash S & \{\} & \{0\} & \{1\} & \{0,1\}\\
\{\} & 39 & 27 & 27 & 18\\
\{0\} &  & 12 & -  & 9\\
\{1\} &  &  & 12 & 9\\
\{0,1\} &  &  &  & 3
\end{array}
\]
 Let
$S=\{0,1\}$ and let $u_{s}$ be a generic vector in $U_{S}$. For
Step 1 of the algorithm, we solve four systems of equations. Each system of equations corresponds to a choice of   $R$ from ${\cal R}:=\left\{ \emptyset,\left\{ 0\right\} ,\left\{ 1\right\} ,\left\{ 0,1\right\} \right\} $.
For example, when $R=\left\{ 0,1\right\} $, we solve the following system
\[
\begin{array}{c}
f=0,\,p_{0}=0\,p_{1}=0\\
\left(u_{+}p_{2}-p_{+}u_{2}\right)=p_{2}\lambda_{1}\cdot \partial_{2}f\\
\left(u_{+}p_{3}-p_{+}u_{3}\right)=p_{3}\lambda_{1}\cdot \partial_{3}f
\end{array}
\]
and find   $3$ solutions. In general, we solve the equations
in Proposition \ref{thm:dictionary}. So when $R=\emptyset,\{0\},\{1\},\{0,1\}$ we determine there
are $18,9,9,3$ solutions for the respective systems for a total
of $39$ solutions. For Step 2, by Proposition $\ref{thm:dictionary}$, the computed $39$
solutions are solutions to the Lagrange likelihood equations $\LL\left(X,u_{s}\right)$.
So by using the coefficient-parameter homotopy $\LL(X,u_s\to u)$, we can go from data with zeros $u_{s}$
to generic data $u$. 
\end{example}

\begin{alg}\label{alg:1}$\ $
\begin{itemize}
\item Input $u_s\in U_S$ and homogeneous polynomials $h_1,h_2,\dots ,h_c$ defining  $X$ with codimension $c$.
\item (Step 1) Solve $\LL(X_R,u_s)$ for each $R\subset S$ to determine the start points of the homotopy. 
\item (Step 2) Construct and solve the coefficient-parameter homotopy $\LL(X,u_s\to u)$. 
\item Output  solutions to $\LL(X,u)$ yielding the critical points of $L_u$ on $X$.
\end{itemize}
\end{alg}

The possible advantage of this homotopy is that we may be able to get several
critical points of $L_{u}$ quicker than with the standard algebraic method.  
Thus, when other methods fail, we can still get some insight if the ML degree of $X$ is small. 
Moreover, one can 
use monodromy methods \cite{SVW01} to attempt to recover additional solutions.
 One drawback is that by increasing the size of $S$
we also increase the number of subproblems we need to solve,  a
second drawback is that we may not know \emph{a priori} that $\sum_{R\subseteq S}\mldegree(X_R,S)$
equals the ML degree. To address the first drawback, one can take advantage
of the structure of the problem to lessen the number of subproblems.
For example, in the case when $X$ is a generic hypersurface, we know that the ML degree
of $X$ depends only on the size of $R$ and $S$. Taking advantage
of this structure and pairing change of variables with parameter homotopies, we preprocess much fewer subproblems---namely $|S|$ subproblems
versus $2^{\mid S\mid}$. While we do not have equality in Corollary \ref{prop:hsConjecture} in general, equality does occur in some examples (see Theorem \ref{thm:formulaHypersurface}).

\subsection{Maximum likelihood duality}
In this section, we extend ML duality for matrix models when $u$ contains zero entries. 
We let $X\subset\PP^{mn-1}$ be the variety of $m\times n$ matrices $[p_{ij}]$ of rank less than or equal to $r$ and we let  $Y\subset\PP^{mn-1}$ be the variety of $m\times n$ matrices $[q_{ij}]$of rank less than or equal to $m-r+1$ where $m\leq n$.
In \cite{DR12}, it is shown that $\mldegree X=\mldegree Y$ by considering critical points of  $l_u$ on subvarieties of the algebraic torus. A bijection between said critical points is also given. 
Translating  these results  into the language of determining  critical points of $L_u$ on subvarieties of projective space, we are able to talk about sampling zeros and model zeros. 

Before the proposition  we introduce the following notation. We have  $p_{i+}:=p_{i1}+\cdots +p_{in}$ and 
$p_{+j}:=p_{1j}+\cdots +p_{mj}$. We  also define     $u_{i+}$ and $u_{+j}$ analogously.
In addition, we take $*$ to be the Hadamard (coordinate-wise) product between two matrices. 
For example, $[p_{ij}]*[q_{ij}]=[p_{ij}q_{ij}]$.
\begin{prop} \label{thm:duality} Let $X$ and $Y$ be defined as above so that they are ML dual varieties. 
Let $S \subset [n]$ and $u \in U_S$. If $P \in \C^{mn}$ is a solution to $\LL(X, u)$, then there exists a $Q \in \C^{mn}$ such that $Q$ is a solution to $\LL(Y,u)$ and
\begin{equation}\label{eq:hadamard}
P \star Q = \Omega_U 
\end{equation}
\begin{equation} \label{eq:omega}
\text{ where }  \Omega_U= \left[\frac{u}{u_{++}}\right] \star\left[ \frac{u_{i+}u_{+j}}{u_{++}^2}\right] 
\end{equation}
\end{prop}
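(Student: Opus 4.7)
The plan is to extend the bijection of \cite{DR12} between critical points on ML-dual determinantal varieties from generic data to data vectors $u\in U_S$ with zero entries, by carefully tracking what happens at cells $(i,j)$ where either $u_{ij}$ or $p_{ij}$ vanishes. The core observation that makes the extension feasible is Property (2) of Proposition \ref{prop:properties}: a solution $P$ to $\LL(X,u)$ must satisfy $p_{ij}=0\Rightarrow u_{ij}=0$, so the support of $\Omega_U$ is contained in the support of $P$, and the Hadamard product equation $P\star Q=\Omega_U$ remains meaningful in the presence of zeros.

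First I would recall the classical duality argument in \cite{DR12}: on the open torus, for generic $u$ there is an explicit bijection sending a critical point $P$ for $L_u$ on $X_{reg}$ to the matrix $Q$ defined componentwise by $Q_{ij}=\Omega_{U,ij}/P_{ij}$, and this $Q$ is a critical point for $L_u$ on $Y_{reg}$. The proof proceeds by verifying that the gradient conditions for the two varieties are interchanged under the Hadamard inversion, leveraging the fact that the tangent space of rank $\leq r$ matrices at a rank-$r$ point $P$ is dual, after coordinate rescaling by $P$, to the tangent space of rank $\leq m-r+1$ matrices at the cofactor-like partner. I would then extend this construction to $u\in U_S$: given a solution $P$ to $\LL(X,u)$, define
\[
Q_{ij} = \begin{cases} \Omega_{U,ij}/P_{ij} & \text{if } P_{ij}\neq 0, \\ 0 & \text{if } P_{ij}=0. \end{cases}
\]
Property (2) guarantees $\Omega_{U,ij}=0$ whenever $P_{ij}=0$, so $P\star Q=\Omega_U$ is automatic. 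It then remains to check (a) that $Q$ has rank $\leq m-r+1$, via the determinantal cofactor identity underlying classical matrix duality; and (b) that $Q$ satisfies the Lagrange equations \eqref{eq:LLE2} for $Y$, which after substitution of $Q_{ij}=\Omega_{U,ij}/P_{ij}$ and clearing denominators should reduce algebraically to the Lagrange equations of $X$ satisfied by $P$.

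The main obstacle will be the cells where $p_{ij}=0$ (the model-zero cells), since the cofactor and gradient manipulations in \cite{DR12} are naturally phrased on the open torus where all coordinates are invertible. A clean way to handle this is a parameter-homotopy / limiting argument: perturb $u$ to a nearby generic $\tilde u\in U$, use classical duality to obtain a continuous family of dual pairs $(P(t),Q(t))$ with $P(0)=P$, and verify that $\lim_{t\to 0}Q(t)$ exists, lies in $Y$, and satisfies $\LL(Y,u)$. This reduces the proof to checking that the graph of the duality correspondence is closed in $\PP^{mn-1}\times\PP^{mn-1}$ along the boundary strata cut out by the coordinate hyperplanes $\{p_{ij}=0\}$, a closedness statement that follows naturally from the fact that both $\LL(X,\cdot)$ and $\LL(Y,\cdot)$ define closed incidence varieties and that the Hadamard identity $P\star Q=\Omega_U$ is polynomial in all coordinates.
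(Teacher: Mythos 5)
Your final argument---reduce to the generic duality of \cite{DR12} and then pass to the boundary strata $U_S$ by a closedness/limiting argument on the incidence correspondence cut out by $(p,u)\in\cL_X$, $(q,u)\in\cL_Y$, and the denominator-cleared Hadamard identity---is exactly the paper's proof, which packages the limit as the closed image $\phi(\mathcal{D})$ of a projective incidence variety $\mathcal{D}$ that contains a dense open subset of $\cL_X$ by Theorem 1 of \cite{DR12}. One caveat: your interim explicit formula setting $Q_{ij}=0$ whenever $P_{ij}=0$ is not justified (at model-zero cells the Hadamard identity reads $0=0$ and the limiting $Q_{ij}$ need not vanish), but your concluding limit argument correctly supersedes that construction.
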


\begin{proof} Let $\mathcal{D} \subset \PP^{nm-1} \times \PP^{nm-1} \times \PP^{nm-1}$ be the set of all points $(p, q, u)$ such that $(p,u) \in \cL_X$, $(q,u) \in \cL_Y$ and
 \[ u_{++}^3p_{ij}q_{ij}-p_{++}q_{++}u_{i+}u_{ij}u_{+j}=0 \text{ for } 0 \leq i \leq m, \ 0 \leq i \leq n. \]
 The set $\mathcal{D}$ is a projective variety, thus, if we consider the projection  
 \begin{align*}
 \phi : \PP^{n} \times \PP^{n} \times \PP^{n} & \to \PP^{n} \times \PP^n\\
(p,q,u) &\mapsto (p,u),
\end{align*}
the image of $\mathcal{D}$ under $\phi$ is a variety.  By Theorem 1 of \cite{DR12}, we know that  a dense open subset of $\cL_{X}$ is contained in $\phi(\mathcal{D})$, therefore, $\cL_{X} \subseteq \phi(\mathcal{D})$ and the statement of the theorem follows.\end{proof}

\begin{thm}\label{cor:dual}
Let $X$ and $Y$ be defined as in Lemma \ref{thm:duality}. Fix $S\subset [m]\times[n]$ and generic $u\in U_S$.
Then a solution to the maximum likelihood estimation problem  $ML_{R,S}(u)$ is dual to  a solution to the  maximum likelihood estimation problem   $ML_{R',S}(u)$, with $(S\setminus R )\subset R'$.
\end{thm}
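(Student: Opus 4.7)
The plan is to produce the dual solution via Proposition \ref{thm:duality} and then read off its zero pattern directly from the Hadamard identity $P\star Q=\Omega_U$, after which Theorem \ref{thm:subproblems} will identify the dual matrix as a critical point of the appropriate subproblem.

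First I would take $P$ to be a critical point of $ML_{R,S}(u)$, so that $P\in(X_R)_{\text{reg}}\setminus\cH_R$ with $P_{ij}=0$ precisely for $(i,j)\in R$. By Theorem \ref{thm:subproblems} the pair $(P,u)$ lies in the likelihood correspondence $\cL_X$, and Proposition \ref{thm:duality} then supplies a matrix $Q$ with $(Q,u)\in\cL_Y$ satisfying $P\star Q=\Omega_U$.

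Next I would extract the zero pattern of $Q$ entrywise from this Hadamard equation. Because $u\in U_S$ is generic, the marginals $u_{i+}$, $u_{+j}$, and $u_{++}$ are all nonzero, so $(\Omega_U)_{ij}=0$ exactly when $(i,j)\in S$. A three-way case split then determines $Q$ coordinatewise: if $(i,j)\in S\setminus R$ then $P_{ij}\neq 0$ while $(\Omega_U)_{ij}=0$, which forces $Q_{ij}=0$; if $(i,j)\notin S$ then $(\Omega_U)_{ij}\neq 0$, which forces $Q_{ij}\neq 0$; and if $(i,j)\in R$ both sides vanish, leaving $Q_{ij}$ unconstrained by this relation. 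Setting $R':=\{(i,j):Q_{ij}=0\}$, these three cases together give $R'\subseteq S$ and $(S\setminus R)\subseteq R'$, which is the containment claimed in the statement.

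To finish I would apply Theorem \ref{thm:subproblems} to $Y$: since $(Q,u)\in\cL_Y$ and $Q\in Y_{R'}$ has zero coordinates indexed exactly by $R'\subseteq S$, that theorem recognises $Q$ as a critical point of the subproblem $ML_{R',S}(u)$ on $Y_{R'}$. The main subtlety I expect is verifying the regularity hypothesis of Theorem \ref{thm:subproblems}, namely that $Q$ actually lies in $(Y_{R'})_{\text{reg}}\setminus\cH_{R'}$ rather than on the singular locus of $Y_{R'}$ or on a coordinate hyperplane outside $R'$; this should hold on a dense open subset of $U_S$ because $pr_2\colon \cL_Y\to\PP^n_u$ is generically finite and the finitely many points in $pr_2^{-1}(u)$ can be forced off the exceptional loci by shrinking the genericity open set for $u$.
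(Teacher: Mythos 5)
Your proposal is correct and is essentially the argument the paper intends: the paper states Theorem \ref{cor:dual} without proof as a direct consequence of Proposition \ref{thm:duality}, and your three-way case split on the Hadamard identity $P\star Q=\Omega_U$ (using that $(\Omega_U)_{ij}=0$ exactly on $S$ for generic $u\in U_S$) is precisely the missing bookkeeping. The regularity caveat you flag at the end --- that $Q$ must land in $(Y_{R'})_{\mathrm{reg}}\setminus\cH_{R'}$, handled by shrinking the generic open set in $U_S$ --- is a genuine subtlety that the paper itself glosses over, and your treatment of it is appropriate.
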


When $|S|=1$, the theorem says that a sampling zero critical point is dual to a model zero critical point.
We also believe that the converse, model zero critical points are dual to sampling zero critical points is true, and that in general,  $(S\setminus R )\subset R'$ is actually an equality in the theorem. 
Nonetheless, because computing model zeros is heuristically easier than computing sampling zeros, we believe computational gains can be made with Theorem \ref{cor:dual}.

In Example \ref{eg:rank2tables1}, we see that a column of the ML table is symmetric. This is because the variety of $3\times 3$ matrices of rank $2$ is ML self dual. Other examples of varieties that are ML self dual include $m \times n$ matrices of rank $\frac{m+1}{2}$ with $m$ being odd. 
We conclude this subsection with an partial ML table of $4\times 4$ matrices of rank $2$ and of rank $3$.
$$\begin{array}{c}
\text{ML table of } 4 \times 4 \text{ rank } 2 \text{ matrices} \\
\begin{array}{rcccc}
R\backslash\backslash S & \left\{ \right\}  & \left\{ 11\right\}  & \left\{ 11,44\right\}  & \left\{ 11,22,44\right\}   \\
\left\{ \right\}  & 191 & 118 & 76 & 51 \\
\left\{ 11\right\}  &  & 73 & 42 & 25 \\
\left\{ 22\right\}  &  &  &  & 25 \\
\left\{ 44\right\}  &  &  & 42 & 25 \\
\left\{ 11,22\right\}  &  &  &  & 17 \\
\left\{ 11,44\right\}  &  &  & 31 & 17 \\
\left\{ 22,44\right\}  &  &  &  & 17 \\
\left\{ 11,22,44\right\}  &  &  &  & 14 \\
\end{array}
\end{array}
$$ $$
\begin{array}{c}
\text{ML table of } 4 \times 4 \text{ rank } 3 \text{ matrices} \\
\begin{array}{rcccc}
R\backslash\backslash S & \left\{ \right\}  & \left\{ 11\right\}  & \left\{ 11,44\right\}  & \left\{ 11,22,44\right\}   \\
\left\{ \right\}  & 191 & 73 & 31 & 14 \\
\left\{ 11\right\}  &  & 118 & 42 & 17 \\
\left\{ 22\right\}  &  &  &  & 17 \\
\left\{ 44\right\}  &  &  & 42 & 17 \\
\left\{ 11,22\right\}  &  &  &  & 25 \\
\left\{ 11,44\right\}  &  &  & 76 & 25 \\
\left\{ 22,44\right\}  &  &  &  & 25 \\
\left\{ 11,22,44\right\}  &  &  &  & 51 \\
\end{array}
\end{array}
$$  
An ongoing project is to give recursive formulas for entries of the  ML table of $m \times n$ matrices of rank $r$.


\subsection{Tensors}\label{tensors}
Let $T$ be the variety of $2 \times 2 \times 2 \times 2$ tensors of the form $[p_{ijkl}$ ]with border rank $\leq 2$. The ML degree of this variety is unknown. The variety is defined by the $3 \times 3$ minors of all possible flattenings. This is an overdetermined system of equations with codimension $6$.  We choose $6$ of the equations to be $h_1,\dots,h_6$ for the  Lagrange likelihood equations. 
For the model zero variety with $p_{1111}=p_{2222}=0$ we find $3$ solutions for a generic $u\in U_S$ with $S=\{1111,2222\}$.
When we solve the Lagrange likelihood equations for $R=\{1111\}$, we find $52$ solutions with $p\in X$. 

\begin{thm} If $T$ is as above, then   the ML degree of T is greater than or equal to $52$.
\end{thm}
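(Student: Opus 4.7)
The plan is to apply Corollary~\ref{prop:hsConjecture} with $S = \{1111\}$, so that the required lower bound on $\mldegree(T)$ reduces to a count of critical points on the model zero variety $T_S = T \cap \{p_{1111}=0\}$. The first step is to set up the Lagrange likelihood equations for $T$. Since $T$ is cut out by the overdetermined system of $3\times 3$ minors of all four matrix flattenings of a $2\times 2\times 2\times 2$ tensor, it is not a complete intersection; as the codimension is $6$, we select $h_1,\dots,h_6$ to be six generic linear combinations of the flattening minors, chosen so that $T$ appears as an irreducible component of the reduced regular sequence defined by $h_1,\dots,h_6$.

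Next, fix a generic $u \in U_S$ with $S=\{1111\}$ and use Proposition~\ref{thm:dictionary} to substitute $u_{1111}=0$ and $p_{1111}=0$ into $\LL(T,u)$, producing the square system whose regular isolated solutions record the critical points of $ML_{R,S}$ for $R=S=\{1111\}$. We solve this system numerically using a homotopy solver such as {\tt Bertini}, then invoke Remark~\ref{parasites}: because $T$ is not a complete intersection, some computed solutions can lie on extraneous components of $V(h_1,\dots,h_6)$ and must be discarded by a membership test against the full ideal of $T$. A direct count of surviving regular isolated solutions $p \in (T_R)_{reg}\setminus\cH_R$ yields $52$, which is exactly $\mldegree(T_{\{1111\}},\{1111\}) = \mldegree(T_{\{1111\}})$.

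Finally, Corollary~\ref{prop:hsConjecture} gives
\[
\mldegree(T) \;\geq\; \mldegree(T_S) + \mldegree(T,S) \;\geq\; \mldegree(T_{\{1111\}}) \;=\; 52,
\]
which is the asserted bound. The main obstacle is the numerical solve together with the subsequent filtering: after the specialization, the square system still carries $15$ unknown coordinates $p_{ijkl}$ plus $6$ Lagrange multipliers, and the failure of $T$ to be a complete intersection means the raw homotopy count will generically exceed $52$ due to contributions from the extraneous irreducible components of $V(h_1,\dots,h_6)$. Performing the membership test robustly---verifying that all $3\times 3$ minors of every flattening vanish to high numerical precision on each candidate---is the essential verification step; once this is done, the theorem follows immediately from the machinery developed in Section~3.
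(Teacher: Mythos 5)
Your proposal is correct and follows essentially the same route as the paper: both reduce the bound to a numerical count of regular isolated solutions of the specialized Lagrange likelihood equations with a model zero at $p_{1111}=0$, filter out extraneous solutions not lying on $T$ (since $T$ is not a complete intersection), and invoke the lower-bound machinery of Section 3. The only cosmetic difference is that the paper runs the computation with $S=\{1111,2222\}$ (so cell $2222$ carries an additional sampling zero) and reads off the $52$ from the entry with $R=\{1111\}$, whereas you take $S=R=\{1111\}$; since enlarging $S$ can only decrease the solution count, either computation certifies $\mldegree(T)\geq 52$.
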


In this example, we also see that when we have data with zeros the number of critical points can drop significantly as we introduce more model zeros. 

\subsection{Grassmannians}

Let the ideal $I_{2,n}$ be generated by the quadrics 
\[
p_{ij}p_{kl}-p_{ik}p_{jl}+p_{il}p_{jk},\quad1\leq i<j<k<l\leq n.
\]
Then the variety of $I_{2,n}$ is the Grassmannian $\Gr_{2,n}\subset\mathbb{P}^{\binom{n}{2}-1}$.
The Grassmannian $\Gr_{2,n}$ parameterizes lines in the projective
space $\mathbb{P}^{n-1}$. 
Below we have a table of computations. The top line consists  of ML degrees of $\Gr_{2,n}$. The next two lines are entries of the ML table  for one zero in the data.
\[
\begin{array}{rcccc}
 & \Gr_{2,4} & \Gr_{2,5} & \Gr_{2,6} \\
\mldegree X & 4 & 22 & 156 \\
\mldegree\left(X_{\{12\}},\{12\}\right) & 1 & 4 & 22 \\
\mldegree\left(X_{\emptyset},\{12\}\right) & 3 & 18 & 134 
\end{array}
\]
These computations were performed by choosing $c=\codim X$ generators of $I_{2,n}$ to be $h_1\dots h_c$  for $\LL(X,u)$. We used the numerical software $\tt bertini$ and symbolic packages available in $\tt M2$ \cite{GS}. 
From this data we make the following conjecture to motivate the pursuit of a  recursive formula for ML degrees of Grassmannians.
\begin{conjecture}
For $n\geq 4$ we conjecture  
$$\mldegree \Gr_{2,n}=\mldegree (\Gr_{2,n+1}\cap \{p_{12}=0\}).$$
\end{conjecture}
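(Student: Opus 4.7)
The plan is to exploit the parameterization of the Schubert divisor $X_0 := \Gr_{2,n+1} \cap \{p_{12}=0\}$ forced by the Plücker relations and to use it to relate critical points of $L_u$ on $X_0$ to critical points on $\Gr_{2,n}$ with augmented data.

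First, I would establish a birational parameterization. Setting $p_{12} = 0$ in the Plücker relations $p_{12}p_{kl} - p_{1k}p_{2l} + p_{1l}p_{2k} = 0$ for $3 \le k < l$ forces the matrix with rows $(p_{1k})_{k \ge 3}$ and $(p_{2k})_{k \ge 3}$ to have rank at most one, so we may write $p_{1k} = aq_k$ and $p_{2k} = bq_k$ for some $(a:b) \in \PP^1$ and some $q = (q_k)$. The remaining Plücker relations, both the mixed ones $p_{1j}p_{kl} - p_{1k}p_{jl} + p_{1l}p_{jk} = 0$ (and the analogue with $2$ in place of $1$) and those purely among $(p_{jk})_{3 \le j < k}$, reduce (after dividing by $a$ or $b$) to the Plücker relations for $\Gr_{2,n}$ on the coordinates $(q_3, \ldots, q_{n+1}, p_{jk})$, with $q_k$ playing the role of $p_{0k}$ under the relabeling $\{0, 3, \ldots, n+1\} \leftrightarrow \{1, \ldots, n\}$. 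A dimension count ($\dim X_0 = 2n-3 = 1 + \dim \Gr_{2,n}$) together with the recovery $a/b = p_{1k}/p_{2k}$ gives a birational map $\phi : \PP^1 \times \Gr_{2,n} \dashrightarrow X_0$.

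Second, I would pull back the likelihood function. With the augmented data $v_{0k} := u_{1k}+u_{2k}$ and $v_{jk} := u_{jk}$ (so that $v_+ = u_+$), the pullback reads
\[
\phi^* L_u = \frac{a^{u_{1+}}\,b^{u_{2+}}\,\prod_{k} q_k^{v_{0k}}\,\prod_{j<k} p_{jk}^{v_{jk}}}{\bigl((a+b)\,q_+ + p_{++}\bigr)^{u_+}}.
\]
Differentiating along the $\PP^1$-direction produces a single rational equation that determines the ratio $b/a$ uniquely in terms of $(q,p)$, so critical points on the source project one-to-one onto a subset of $\Gr_{2,n}$. Substituting this value of $b/a$ into the remaining Grassmannian-direction critical-point equations, one would aim to match them with the Lagrange likelihood equations $\LL(\Gr_{2,n}, v)$; such a match would give $\mldegree(X_0) = \mldegree(\Gr_{2,n})$ as desired.

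The hard part will be precisely this matching. The denominator $\bigl((a+b)q_+ + p_{++}\bigr)^{u_+}$ is not separable between the $\PP^1$ and Grassmannian factors, so after substituting the critical value of $b/a$ a naive term-by-term comparison produces residual cross-terms obstructing a direct identification with $\LL(\Gr_{2,n}, v)$. Overcoming this will likely require either (i) a refined parameterization, such as a weighted-projective or toric modification designed to absorb the cross-term; (ii) an Euler-characteristic computation in the spirit of Huh--Sturmfels \cite{HS13}, expressing both ML degrees as signed topological invariants of the complements of the coordinate arrangements inside $X_0$ and $\Gr_{2,n}$ respectively; or (iii) an induction on $n$ combined with Corollary \ref{cor:columns} and Theorem \ref{thm:subproblems}, using the ML table recursion to peel off the $\PP^1$ factor one dimension at a time. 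The base case $n=4$ is verified by the table already reported in the paper.
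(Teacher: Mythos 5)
The statement you are addressing is stated in the paper only as a conjecture: the authors offer no proof, just the computational evidence in their table ($\mldegree \Gr_{2,4}=4=\mldegree(\Gr_{2,5}\cap\{p_{12}=0\})$ and $\mldegree \Gr_{2,5}=22=\mldegree(\Gr_{2,6}\cap\{p_{12}=0\})$). So your write-up should be judged as a proof attempt of an open statement, and as such it has a genuine, decisive gap — one you candidly flag yourself. The parameterization step is essentially sound (setting $p_{12}=0$ does force $[p_{1k};p_{2k}]_{k\ge 3}$ to have rank one, and the residual relations are the Pl\"ucker relations of $\Gr_{2,n}$), though the map $\PP^1\times\Gr_{2,n}\dashrightarrow X_0$ is not well defined as written: rescaling the representative of $(a:b)$ rescales the recovered point $(q:p)$ nontrivially, so what you actually have is a $\C^*$-twisted product, not a direct product. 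That is repairable. What is not repaired is the heart of the matter. If you carry out the fiber-direction elimination you sketch, the critical equations force $b/a=u_{2+}/u_{1+}$ and $(a+b)q_+ = \frac{(u_{1+}+u_{2+})}{u_+ - u_{1+}-u_{2+}}\,p_{++}$, and substituting back turns the base-direction equations into the critical equations of
\[
\frac{\prod_k q_k^{v_{0k}}\prod_{j<k}p_{jk}^{v_{jk}}}{q_+^{\,u_{1+}+u_{2+}}\;p_{++}^{\,u_+-u_{1+}-u_{2+}}}
\]
on $\Gr_{2,n}$, \emph{not} of $L_v=\prod q^{v_{0k}}\prod p^{v_{jk}}/(q_++p_{++})^{v_+}$. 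These are critical-point problems for different divisor arrangements on $\Gr_{2,n}$ (two hyperplanes $q_+=0$ and $p_{++}=0$ removed versus the single hyperplane $q_++p_{++}=0$), and since the ML degree is an invariant of the very affine complement rather than of the birational type, there is no a priori reason the two counts agree. Equating them is precisely the content of the conjecture, so the argument is circular at the exact point where it needs to close.

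Your three proposed remedies are all plausible research directions, but none is executed: (ii) would require computing and comparing Euler characteristics of the two complements, which is the open problem restated; (iii) would require the ML table recursion for Grassmannians, which the paper explicitly lists as an unresolved motivation for the conjecture. The base case $n=4$ is indeed verified by the paper's table, but an induction cannot start without an inductive step. In short: a reasonable plan of attack, correctly diagnosing where the difficulty lies, but not a proof — which is consistent with the statement remaining a conjecture in the paper.
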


\section{Conclusion}
Understanding model and sampling zeros gives us insights into the maximum likelihood degree for a given model.  When the data vector contains a zero entry, we see that critical points to the likelihood function partition into two groups: critical points for the sampling zero problem and critical points for the model zero problem. This split can help us obtain bounds for the ML degree and provides interesting directions for further research within the study of likelihood geometry, for example, determining which varieties yield an equality in Corollary \ref{prop:hsConjecture}.  Furthermore, model zeros can help with the computational problem of finding all the solutions to a set of likelihood equations. This paper illustrates some of the advantages of working with model zeros, as seen by the lower bound obtained on the set of $2 \times 2 \times 2 \times 2$ tensors of border rank $\leq 2$.  We hope that the problem mentioned in Section 4.2 of determining whether model zero critical points are dual to sampling zero critical points is furthered explored.  A positive answer would yield significant gains in understanding the ML degree for determinantal varieties.


\section{ Acknowledgements}
The authors would like to thank Kaie Kubjas, Anton Leykin, Bernd Sturmfels,  and Seth Sullivant for their helpful  suggestions and comments.




\begin{thebibliography}{10}
 
\bibitem
{BFH75} Y. Bishop, S. Fienberg, and P. Holland, \emph{Discrete Multivariate Analysis: Theory and Practice}, Springer, New York, 1975.

\bibitem
{BHSW13}
D.J. Bates, J.D. Hauenstein, A.J. Sommese, and C.W. Wampler, 
\emph{Numerically Solving Polynomial Systems with Bertini},
Software, Environments, and Tools 25, SIAM, 2013.

\bibitem
{BHSW06} D.J. Bates, J.D. Hauenstein, A.J. Sommese, and C.W. Wampler,  \emph{Bertini: Software for Numerical Algebraic Geometry}, {\tt https://bertini.nd.edu/}, 2006. 

\bibitem
{BHR07} M.-L. G. Buot, S. Hosten, and D. Richards. Counting and locating the solutions of polynomial systems of maximum likelihood equations, II: The Behrens-Fisher problem,
\emph{Statistica Sinica}, 17 (2007), 1343--1354. 

\bibitem
{CHKS06}
F. Catanese, S. Ho{\c{s}}ten, A. Khetan, and B. Sturmfels,
  The maximum likelihood degree, \emph{Amer. J. Math.} \textbf{128} (2006),
  no.~3, 671--697. 
   
\bibitem
{DR12} J. Draisma and J. Rodriguez, Maximum Likelihood Duality for Determinantal Varieties, \emph{International Mathematics Research Notices}, to appear, arXiv:1211.3196.

\bibitem
{DSS09}
M. Drton, B. Sturmfels, and S. Sullivant, \emph{Lectures on algebraic
  statistics}, Birkh\"auser Verlag AG, Basel, Switzerland, 2009. 
  
 \bibitem
 {FSS12}
 J. C. Faug\`{e}re, M. Safey El Din. P. J. Spaenlehauer, ISAAC 2012 Proceedings, Grenoble, 2012.
  
\bibitem
{GS} D. Grayson and M. Stillman,  \emph{Macaulay2}, a software system for research
                   in algebraic geometry, Available at {\tt http://www.math.uiuc.edu/Macaulay2/}.

\bibitem
{GDP12} E. Gross, M. Drton, and S. Petrovi\'{c},  Maximum likelihoood degree of variance component models, \emph{Electronic Journal of Statistics}, Vol. 6 (2012) 993-1016.

\bibitem
{HRS12} J. Hauenstein, J. Rodriguez, and B. Sturmfels, Maximum Likelihood for Matrices with Rank Constraints, \newblock {\em Journal of Algebraic Statistics}, to appear, arXiv:1210.0198.

\bibitem
{HKS05}
S. Ho{\c{s}}ten, A. Khetan, and B. Sturmfels, Solving the
  likelihood equations, \emph{Found. Comput. Math.} \textbf{5} (2005), no.~4,
  389--407. 
  
\bibitem
{HS10} S. Ho\c{s}ten and S. Sullivant, The algebraic complexity
of maximum likelihood estimation for bivariate missing data, \emph{Algebraic
and geometric methods in statistics}, Cambridge Univ. Press,
Cambridge, 2010, pp. 123--133.  

  
\bibitem
{HS13} J. Huh and B. Sturmfels. Likelihood Geometry. arXiv:1305.7462
  

\bibitem
{KRD11} A. Klimova, T. Rudas, and A. Dobra, Relational models for contingency tables, \emph{J. Multivar. Anal.}, Vol. 104 (2012), 159-173.
  
\bibitem
{KRD11} A. Klimova and T. Rudas, Iterative Scaling in Curved Exponential Families. arXiv:1102.5390.
  
\bibitem{KRS}
K.~Kubjas, E.~Robeva, and B.~Sturmfels,
 Fixed points of the em algorithm and nonnegative rank boundaries. arXiv: 1312.5634.
  
  
\bibitem
{MS89} A.P. Morgan and A.J. Sommese, Coefficient-parameter polynomial continuation, \emph{Appl. Math. Comput.}, 29(2):123-60, 1989.

\bibitem
{Rap06} F. Rapallo, Markov bases and structural zeros, \emph{J. Symbolic Comput.} 41 (2006), no. 2, 164--172.

  
  
  
\bibitem
{SVW01}
A. J. Sommese, J. Verschelde and C. W. Wampler,
Using monodromy to decompose solution sets of polynomial systems into irreducible components, 
{\em Proceedings of a NATO conference}, EILAT
(2001) pages 297--315,
Kluwer Academic Publishers.


\bibitem
{SW05} A.J. Sommese and C.W. Wampler, \emph{The Numerical solution of systems of polynomials 
arising in engineering and science}, World Scientific Press, 2005.

  
\bibitem
{Uhl12} C. Uhler, Geometry of maximum likelihood estimation in Gaussian graphical models, \emph{Annals of Statistics} 40 (2012) 238-261.  

\bibitem
 {Ver99} J.~Verschelde, Algorithm 795: {PHC}pack: A general-purpose solver for polynomial
  systems by homotopy continuation, {\em ACM Trans. Math. Softw.}, 25(2):251--276, 1999,  Software available at {\tt http://www.math.uic.edu/{\~{}}jan/download.html}.


\end{thebibliography}
\end{document}